\documentclass[11pt,reqno]{amsart}
\usepackage[T1]{fontenc}
\usepackage{lmodern,microtype}
\usepackage{amsmath,amssymb,mathtools,booktabs}
\usepackage[margin=1in]{geometry}
\usepackage[hidelinks]{hyperref}
\usepackage[nameinlink,noabbrev]{cleveref}
\numberwithin{equation}{section}
\newtheorem{theorem}{Theorem}[section]
\newtheorem{lemma}[theorem]{Lemma}

\theoremstyle{remark}\newtheorem{remark}[theorem]{Remark}
\newcommand{\Vol}{\operatorname{Vol}}

\newcommand{\R}{\mathbb R}
\newcommand{\dd}{\,\mathrm d}
\allowdisplaybreaks
\hypersetup{
  pdftitle={A 0.41275 lower bound for the three-dimensional Blaschke--Lebesgue problem},
  pdfauthor={Yunjiang Jiang},
  pdfsubject={Convex bodies of constant width in Euclidean three-space},
  pdfkeywords={Blaschke--Lebesgue problem, constant width, Meissner conjecture, contact tetrahedron, spindle convexity}
}
\title[A $0.41275$ lower bound for the Blaschke--Lebesgue problem]
{A $0.41275$ lower bound for the three-dimensional Blaschke--Lebesgue problem}
\author{Yunjiang Jiang}
\date{}
\subjclass[2020]{Primary 52A15; Secondary 52A20, 52A40}
\keywords{Blaschke--Lebesgue problem, body of constant width, Meissner conjecture, circumsphere contacts, spindle convexity}
\begin{document}
\begin{abstract}
We prove that every convex body $K\subset\R^3$ of constant width $w>0$ satisfies
\[
   \Vol(K)\ge \frac{1651}{4000}w^3=0.41275\,w^3.
\]
This improves the certified lower bound $0.411040473721188\ldots\,w^3$ announced by Hyra and the earlier spectral bound $4\pi w^3/33$ of Nishioka. The main new ingredient is a contact-tetrahedron estimate near equality in Jung's circumradius inequality. Four balanced contacts of a minimum circumscribed ball form a near-regular tetrahedron; constant width then forces, beyond the tetrahedron itself, spherical regions over its faces, spindle sectors along its edges, and additional face-edge sectors. These regions can be selected with pairwise disjoint interiors and quantified by the total squared-edge deficit. A sharp convexity argument relates that deficit to the circumradius, and a symmetrization argument controls the four face contributions with only quadratic loss. This gives the analytic bound $\Vol(K)>0.41275318$ near the Jung endpoint.

For the complementary circumradius range we use Hyra's radial transport/tangent-majorant mechanism together with closed-form contact and antipodal profiles. The antipodal contribution has an elementary primitive. For the contact contribution, a second-order concavity estimate converts the angular Jacobian into an explicit rational lower density; that density is monotone in the circumradius. The whole intermediate range therefore reduces to a single endpoint calculation, giving $\Vol(K)>0.41275168$. For smaller circumradius the tangent-majorant inequality with its nonnegative gap discarded already gives $\Vol(K)>0.41308109$. The proof is entirely analytic and requires no computer-assisted certificate.
\end{abstract}
\maketitle

\section{Introduction}
A convex body $K\subset\R^3$ has \emph{constant width} $w>0$ if the distance between its two supporting planes orthogonal to any unit vector is $w$. Write
\[
 m_3(w)=\inf\{\Vol(K):K\subset\R^3\text{ is a convex body of constant width }w\}.
\]
Scaling gives $m_3(w)=w^3m_3(1)$. The three-dimensional Blaschke--Lebesgue problem asks for $m_3(1)$ and for the bodies that attain it.

\subsection{Historical context}
The problem belongs to one of the oldest threads in convex-geometric shape optimization. Meissner studied three-dimensional sets of constant width in 1911 \cite{Meissner1911}, and Meissner and Schilling described physical models of the now classical constant-width modifications of the Reuleaux tetrahedron in 1912 \cite{MeissnerSchilling1912}; see also the historical account of Kawohl and Weber \cite{KawohlWeber2011}. In the plane, Lebesgue and Blaschke independently proved in 1914--1915 that the Reuleaux triangle minimizes area among bodies of prescribed constant width \cite{Lebesgue1914,Blaschke1915}. Bonnesen and Fenchel subsequently conjectured in their 1934 monograph that the two noncongruent Meissner tetrahedra minimize volume in dimension three \cite{BonnesenFenchel1934}. Thus the standard minimizer conjecture has remained open for more than ninety years, while its geometric roots go back more than a century. It is appropriately described as one of the longest-standing open extremal problems in convex geometry.

The common volume of the two Meissner tetrahedra of width $w$ is
\begin{equation}\label{eq:meissner-volume}
 V_{\mathrm M}(w)=\frac{\pi}{12}
 \left(8-3\sqrt3\arccos\frac13\right)w^3
 =0.419860045965080\ldots\,w^3;
\end{equation}
see \cite{KawohlWeber2011,Bogosel2026}. Several partial structural results are known. Campi, Colesanti, and Gronchi solved the restricted minimum-volume problem for bodies of revolution \cite{CampiColesantiGronchi1996}. Bayen, Lachand-Robert, and Oudet developed an analytic parametrization and weak optimality conditions for three-dimensional bodies of constant width \cite{BayenLachandRobertOudet2007}. Anciaux and Guilfoyle proved that the smooth pieces of a minimizer must have constant smaller principal curvature, hence are spherical caps or pieces of canal surfaces \cite{AnciauxGuilfoyle2011}. Montejano and Rold\'an-Pensado developed the modern theory of Meissner polyhedra \cite{MontejanoRoldan2017}; Hynd proved their density among constant-width bodies and later their extremality under Minkowski addition \cite{Hynd2024,Hynd2026}; Bogosel obtained explicit volume formulas and finite-dimensional reductions \cite{Bogosel2026}; and Arman, Bondarenko, Prymak, and Radchenko constructed further small-volume bodies with tetrahedral symmetry \cite{ArmanEtAl2025}.

The classical universal lower bound of Chakerian is
\[
 \Vol(K)\ge \frac{\pi}{3}(3\sqrt6-7)w^3
 =0.364916122595\ldots\,w^3
\]
\cite{Chakerian1966}. Nishioka recently improved this to
\begin{equation}\label{eq:nishioka-intro}
 \Vol(K)\ge\frac{4\pi}{33}w^3
 =0.380799109526\ldots\,w^3
\end{equation}
by a support-function and spherical-harmonic argument \cite{Nishioka2026}. Very recently, Hyra introduced a different geometric method based on minimum-circumsphere contacts, an opposite-point identity, radial transport, and a tangent majorant. The method gives the analytic lower bound $0.404120778796972\ldots\,w^3$ and, after a finite exact-rational cap certificate, the stronger bound
\begin{equation}\label{eq:hyra-intro}
 \Vol(K)>0.411040473721188\,w^3
\end{equation}
\cite{Hyra2026}.

Our main theorem is the following.
\begin{theorem}[Main theorem]\label{thm:main}
Every convex body $K\subset\R^3$ of constant width $w>0$ satisfies
\begin{equation}\label{eq:main}
 \boxed{\Vol(K)\ge\frac{1651}{4000}w^3=0.41275\,w^3.}
\end{equation}
\end{theorem}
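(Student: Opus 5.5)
By scaling, $m_3(w)=w^3m_3(1)$, so it suffices to take $w=1$. The natural parameter is the circumradius $R$ of $K$. Jung's inequality gives $R\le\sqrt{3/8}$. For bodies of constant width one also has $R\ge 1/2$, and more precisely $R+r=1$, where $r$ is the inradius and the circumsphere and insphere are concentric. I would split $[1/2,\sqrt{3/8}]$ into three ranges,
\[
 [\tfrac12,R_1],\qquad [R_1,R_0],\qquad [R_0,\sqrt{3/8}],
\]
with thresholds $R_1<R_0$ chosen close to the Jung endpoint. Each range gets its own lower bound, and each bound must exceed $1651/4000$. The abstract's numbers $0.41308109$, $0.41275168$ and $0.41275318$ indicate the targets, with only about $10^{-6}$ of slack in the two upper ranges.

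\emph{Small circumradius.} For $R\le R_1$ I would invoke Hyra's contact/antipodal decomposition directly. Let $c$ be the circumcenter. The contact points of the minimum circumscribed ball, three or four of them, are balanced around $c$. By constant width, each contact point $p$ has an opposite point $p-u$ with $|u|=1$, so $K$ contains a segment of length $1$ through the region near $p$. Radial transport from $c$ combined with the tangent majorant then gives a lower bound of the form $\Vol(K)\ge F(R)$, after discarding the nonnegative gap term. Since $F$ is monotone decreasing in $R$, it is enough to evaluate it at $R=R_1$ and check that the value is at least $0.41308$.

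\emph{Intermediate range.} For $R_1\le R\le R_0$ I would keep the gap term, writing it as a contact integral plus an antipodal integral over explicit spherical profiles. The antipodal integral has an elementary primitive. For the contact integral, the angular Jacobian is not explicit, so I would bound it below by a second-order Taylor/concavity estimate, giving a rational density in the angle and $R$. The key claim to verify is that this density is monotone in $R$. Granting that, the whole range reduces to a single closed-form endpoint evaluation, which I expect to give about $0.412752$.

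\emph{Near the Jung endpoint (main obstacle).} For $R\ge R_0$ the four contacts form a tetrahedron $T$ that is nearly regular with edge close to $1$. I would measure its distortion by the squared-edge deficit
\[
 \delta=\sum_{i<j}\bigl(1-|p_i-p_j|^2\bigr).
\]
The plan is to show that $K$ contains the following pieces with pairwise disjoint interiors:
\begin{itemize}
 \item $T$ itself;
 \item over each face of $T$, a spherical cap of radius $1$ centred at the opposite vertex (forced by constant width);
 \item along each edge, a spindle sector (the intersection of the two unit balls about the edge's endpoints);
 \item additional face-edge sectors.
\end{itemize}
Their total volume is at least $V_{\mathrm{Meissner\text{-}like}}-C\delta$. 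Separately, a convexity inequality on the Gram matrix of the contact vectors should give $\delta\le C'(\sqrt{3/8}-R)$ sharply. The face caps depend on the whole tetrahedron shape. I would control them by symmetrizing each face to equilateral form, which I expect to cost only $O(\delta^2)$, so that the linear loss is carried entirely by explicit edge terms.

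I expect the hardest parts to be the disjointness of the face-edge sectors and the sharp constants in this range, since the margin is tiny. I would first verify both at the regular configuration and then perturb.
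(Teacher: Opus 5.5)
There is a genuine gap. Your three-regime outline is the paper's, but the near-Jung step fails as you wrote it, because two of the regions you propose to place inside $K$ are not subsets of $K$.

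\textbf{Edge regions.} The lens $B(x_i,1)\cap B(x_j,1)$ of unit balls centred at the endpoints of an edge \emph{contains} $K$, since $K$ has diameter one. So no sector of it is a lower-bound region. What constant width actually forces is the spindle, the intersection of all unit balls containing both endpoints. With the edge midpoint at the origin, it has height $t_0(z)=\sqrt{1-z^2}-\sqrt{1-\ell^2/4}$ over $ze$. Its sector in the exterior normal cone has volume $\beta_eJ(\ell_e)/2$, about $0.0093$ at the regular configuration.

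\textbf{Face regions.} The region over a face lying under the unit sphere centred at the opposite vertex $x_4$ is in $K$ only if $|x_4-x_i|=1$ for all three face vertices. If $|x_4-x_1|<1$, that sphere passes strictly above $x_1$. The points $x_1+tn$ with small $t>0$ lie under it but outside $B(0,R)$, because $x_1\cdot n=h>0$. Since $D=16d+16|b|^2\ge16d>0$ for $R<\sqrt{3/8}$, this happens throughout the near-Jung range. The paper's face region (\cref{lem:faceinclusion}) combines $|y-x_i|\le1$ with the circumball constraint $n\cdot y\ge-R$. This gives heights $\sqrt{\kappa^2+W(p)}-\kappa$ with $\kappa=R+h$, which vanish at the face vertices. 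The face--edge flanks use yet another sphere: the unit sphere through the three face vertices with centre on the inner side, whose radially projected triangle lies in every unit ball containing those vertices. ``Verify at the regular configuration and then perturb'' only works if what you perturb is contained in $K$ away from the regular point.

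\textbf{Quantitative steps.} Every remaining quantitative step is stated as an expectation, and with margins of order $10^{-6}$ those steps are the proof. Two are also misdescribed.
\begin{itemize}
\item Symmetrizing a face to the equilateral face with the same total squared edge length costs only $O(\tau_F^2)$. But that equilateral face is smaller, so the face terms keep the largest linear loss: $-I_{\rm reg}D$, coming from $4I_{\rm reg}(1-D/6)^{3/2}$. The paper gets this via convexity of $s\mapsto s\,f(R+\sqrt{R^2-s/3},sw)$ and Jensen over the four faces. It sits alongside $-D/(24\sqrt2)$ from $\Vol(T)$ and $-\beta_0j_0D/2$ from the spindles. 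So the linear loss is not carried by the edge terms alone.
\item The deficit comparison must be exactly the sharp \eqref{eq:sharpD}. The cruder $D\le16d+576d^2$ already costs about $3\times10^{-4}$ at $R_0$, roughly a hundred times the available margin.
\end{itemize}

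\textbf{Intermediate range.} Reducing to the single endpoint $R=R_0$ needs more than monotonicity of the contact density. The baseline-plus-antipodal term $C(R)$ must also decrease, which is \cref{lem:Cmonotone}. You also need the disjointness checks:
\begin{itemize}
\item the contact caps are pairwise disjoint via $u_i\cdot u_j\le T$;
\item the antipodal caps reach $U=s$ before the separation angle, via \eqref{eq:sep-factor};
\item positive and negative caps are separated by the level $\rho=s$.
\end{itemize}
The last point is what makes $G(\rho)\ge G(L)$ and $G(\rho)\ge G(U)$ valid on those caps.
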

Together with \eqref{eq:meissner-volume}, this gives
\[
 0.41275\le \frac{m_3(w)}{w^3}
 \le0.419860045965080\ldots.
\]
The theorem does not identify a minimizer and does not prove the Meissner conjecture.

\subsection{The proof at a glance}
Normalize $w=1$ and place the center of a minimum circumscribed ball at the origin. Let $R$ be its radius. Jung's inequality gives
\[
 \frac12\le R\le\sqrt{\frac38}.
\]
Put
\[
 R_1=\frac{151}{250}=0.604,
 \qquad
 R_0=\frac{152597}{250000}=0.610388.
\]
The proof has three analytic regimes.

\paragraph{Small circumradius: $R\le R_1$.}
Use the radial transport inequality with the fixed choices
\[
 m=1-R_1^2,\qquad s=\frac{231}{500}.
\]
Since $m\le1-R^2$, the inequality is valid throughout this range. Discarding its nonnegative tangent gap gives directly
\[
 \Vol(K)>0.4130810985.
\]

\paragraph{Intermediate circumradius: $R_1\le R\le R_0$.}
Set
\[
 c=\frac{571}{5000},\qquad m=1-R^2,\qquad s=R-c.
\]
Around each of the four balanced circumsphere contacts there is a closed-form radial lower profile, and around each antipode there is a closed-form radial upper profile. Contact balance makes the four same-sign caps disjoint. The antipodal cap ends at the level $\rho=s$ before the disjointness threshold is reached, and its entire tangent-gap contribution has an elementary rational primitive.

The positive contact cap is the only delicate term. Writing its profile in terms of the output radius $v$, the angular Jacobian contains a factor $B^{-1}$ with $B=\sqrt{R^2-A^2}$. Concavity gives the second-order upper estimate
\[
 B\le \gamma-\frac{\alpha}{\gamma}x-
        \frac{R^2}{2\gamma^3}x^2,
 \qquad x=A-\alpha,
\]
which turns the resulting cap contribution into an explicit rational lower density $\Phi_R(z)$. We prove pointwise that $\Phi_R(z)$ decreases with $R$. The baseline plus antipodal contribution also decreases with $R$. Consequently the whole interval is controlled by the single endpoint $R=R_0$. A short Taylor-coefficient estimate at that rational endpoint gives
\[
 \Vol(K)>0.4127516885.
\]
No mesh or finite numerical certificate is used.

\paragraph{Near-Jung circumradius: $R_0\le R\le\sqrt{3/8}$.}
Here four balanced contacts form a quantitatively near-regular tetrahedron $T$. If
\[
 D=\sum_{i<j}\bigl(1-|x_i-x_j|^2\bigr)
\]
is its total squared-edge deficit and $d=3/8-R^2$, a matrix-convexity argument gives the sharp comparison
\begin{equation}\label{eq:intro-sharpD}
 D\le\frac{16d}{1-8d}.
\end{equation}
Constant width forces considerably more volume than $T$ itself: spherical regions above the four faces, spindle sectors along the six edges, and additional spherical sectors between the face regions and the edge spindles. They can be chosen with disjoint interiors. Symmetrizing the four face contributions cancels their first-order anisotropy, leaving only quadratic loss. The resulting explicit function $B(D)$ from \cref{thm:geometric} gives
\[
 \Vol(K)>0.41275318
\]
at $R=R_0$, and only improves as $R$ approaches the Jung endpoint.

\subsection{Exact constants and computational scope}
The argument is computer-free. Every terminating decimal appearing in an inequality denotes the corresponding exact rational number. The few transcendental comparisons used at the end are derived from elementary rational bounds for $\pi$, $\sqrt3$, $\sqrt{2/3}$, and $\arcsin(1/3)$; for completeness these bounds and their short series proofs are recorded in \cref{lem:constants}. Thus no ancillary verifier, floating-point decision, optimization output, or hidden numerical certificate is part of the proof.

\subsection{Organization}
\Cref{sec:constant-width} develops constant-width identities, circumsphere contacts, and the sharp edge-deficit comparison. The next two sections quantify the volume forced by the contact tetrahedron: first the tetrahedron and face regions, then the edge spindles and the additional face-edge sectors. The final section develops the analytic radial estimate, records the elementary rational bounds for the few transcendental constants, and assembles the three circumradius regimes.

\section{The near-Jung geometric estimate}\label{sec:geometric-overview}
We first state the quantitative estimate that will be proved in \cref{sec:constant-width} and the following two sections. Put
\begin{align*}
 I_*&=\frac{151889}{2500000}=0.0607556,&
 b_0&=\frac{\sqrt3}{2},& k_0&=\frac{\sqrt3}{6},\\
 \phi_0&=\arcsin(1/3),& \beta_0&=\frac\pi2+\phi_0,&
 J_0&=\frac{11}{12}-\frac{\pi\sqrt3}{6},\\
 j_0&=\frac{\pi}{12\sqrt3}-\frac18,&
 E_0&=J_0(b_0\phi_0-k_0).
\end{align*}
For $0\le D\le0.06$, define
\begin{equation}\label{eq:B}
\begin{split}
 B(D)={}&\frac{\sqrt{(1-D)(2+D)}}{12}
 +4I_*\left(1-\frac D6\right)^{3/2}+3\beta_0J_0+12E_0\\
 &-\left(\frac{\beta_0j_0}{2}+0.00069\right)D-0.081D^2.
\end{split}
\end{equation}
All terminating decimals in the proof denote the corresponding exact rational numbers.

\begin{theorem}[Near-Jung geometric estimate]\label{thm:geometric}
Suppose a width-one body $K$ has circumradius $R\ge0.609$, and let $T$ be a balanced contact tetrahedron. If the six squared-edge deficits of $T$ are nonnegative and have sum $D\le0.06$, then
\[
 \Vol(K)\ge B(D).
\]
\end{theorem}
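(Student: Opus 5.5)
The plan is to decompose a region forced inside $K$ into pieces with pairwise disjoint interiors and add their volumes, matching the terms of $B(D)$ one at a time. Let $x_1,\dots,x_4$ be the balanced contacts, with edge lengths $\ell_{ij}=|x_i-x_j|\le 1$ and squared deficits $\delta_{ij}=1-\ell_{ij}^2\ge0$ summing to $D$.

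\textbf{Tetrahedron term.} The term $\sqrt{(1-D)(2+D)}/12$ is the volume of $T$, or a lower bound for it. For a regular tetrahedron with unit edges this gives $\sqrt2/12$, which matches at $D=0$. I would get it from the Cayley--Menger determinant expanded in the $\delta_{ij}$. The worst case at fixed $D$ is handled by a concavity or Schur-type argument in the Gram matrix, of the same kind as the matrix-convexity argument behind \eqref{eq:intro-sharpD}.

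\textbf{Face terms.} Constant width gives $K\subset B(y,1)$ for every $y\in K$, and $K$ contains the intersection of all balls $B(y,1)$ with $K\subset B(y,1)$ (spindle convexity). Each vertex $x_i$ supports a unit-radius sphere centered at the point of $K$ opposite $x_i$. The cleaner statement I would use is that $K$ contains the ball-hull of $T$: the region above the face opposite $x_i$ lying inside $B(x_i,1)$ lies in $K$. That is a spherical cap-like region over the face, whose volume in the regular case is $I_*$.

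To control the deformation I would write each face region's volume as a function of the three edge lengths of its face. I would then symmetrize: averaging over the four faces, the first-order terms add up to a multiple of $D$. This is consistent with the factor $(1-D/6)^{3/2}$, which is the natural scaling by mean squared edge length. The remaining quadratic terms are bounded by a Hessian estimate on $[0,0.06]$, and those constants go into the $0.081D^2$.

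\textbf{Edge and face--edge terms.} Along each edge $x_ix_j$, the spindle, i.e.\ the intersection of unit balls containing $x_i,x_j$, lies in $K$. The part outside $T$ and outside the face caps is a spindle sector of dihedral angle $\beta_0=\pi/2+\phi_0$ minus the tetrahedron's dihedral angle. Its cross-section area is $J_0$ at unit length, which gives $3\beta_0J_0$ in total over the six edges. The deficit derivative $j_0$ produces the linear loss $\beta_0j_0D/2$.

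The twelve face--edge sectors, between a face cap and an adjacent edge spindle, are spherical sectors around the vertices. Each contributes $E_0=J_0(b_0\phi_0-k_0)$ at regularity, with perturbation absorbed into the $0.00069D$ slack. Disjointness of interiors would be checked angularly: face regions live in the normal cones of faces, spindles in dihedral wedges, and the sectors in the complementary wedges. Near-regularity, with $D\le0.06$ and $R\ge0.609$, keeps these cones ordered.

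\textbf{Main obstacle.} The hardest step is the face symmetrization with only quadratic loss. One needs an explicit second-order remainder bound uniform over all admissible deficit vectors, and the cancellation of anisotropy depends on exact linear coefficients. I would first try to parametrize each cap by its circumradius and by the apex of the bounding sphere. Then I would show the cap volume is a concave function of the squared edges. With that, Jensen gives the symmetric bound directly and leaves only the error from the $(1-D/6)^{3/2}$ normalization to estimate.
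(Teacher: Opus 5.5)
\textbf{The face step is where your proposal breaks, and it is decisive.}

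First, the inclusion you state is backwards as soon as some edge at $x_i$ is shorter than $1$. One has $K\subset B(x_i,1)$, not conversely. Points $p+tn$ over the opposite face $F$, with $p$ near a vertex $x_j$ and small $t>0$, lie in $B(x_i,1)$. But they satisfy $|p+tn|^2=|p|^2+2th+t^2>R^2$, where $n$ is the outer normal of $F$ and $h>0$ the distance of its plane from the circumcenter. So they are not in $K$.

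What spindle convexity really gives over $F$ is the ball-hull cap under the unit sphere through the three vertices of $F$. Its height is $\sqrt{1-q+W(p)}-\sqrt{1-q}$, with $W(p)=q-|p-hn|^2$. That region sees only the face, and it is too small for $B(D)$. For a regular face of squared side $s$, its offset $\sqrt{1-s/3}$ exceeds the homogeneous value $\sqrt{2s/3}$ by about $\frac{\sqrt6}{4}(1-s)$. Since the cap volume is symmetric in the three sides, the four caps fall short of $4I_{\rm reg}(1-D/6)^{3/2}$ by about $0.026D$ at first order, for every distribution of deficits.

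Nothing in $B(D)$ absorbs this. With all six deficits equal, $\Vol(T)$ and the edge sectors match their terms to first order, and the only other linear allowance is the $0.00069D$ spent on the flanks. At $D\approx0.04$, the value needed at $R=R_0$, the shortfall of about $10^{-3}$ far exceeds the few $10^{-4}$ of remaining slack. Your remark that symmetrization is consistent with $(1-D/6)^{3/2}$ is exactly where this hides. The sum over faces of any symmetric face functional is first-order proportional to $D$; the value of the constant is the whole issue.

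The missing idea is to put the circumball into the face region. For $y\in K$, averaging $|y-x_j|^2\le1$ with the barycentric weights of $p$, and using $|x_j|=R$, gives $|y|^2-2p\cdot y+R^2\le1$. The bound $n\cdot y\ge-R$ then puts $p+tn\in K$ for $t\le\sqrt{\kappa^2+W(p)}-\kappa$ with $\kappa=R+h$ (\cref{lem:faceinclusion}). By \eqref{eq:Didentity}, $R^2=\frac38(1-D/6)+|b|^2$, and $|b|^2=O(D^2)$ by \cref{lem:sharpD}. Hence the symmetrized offset is $\sqrt{2\bar s/3}+O(D^2)$ with $\bar s=1-D/6$. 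This is what produces the $(1-D/6)^{3/2}$ scaling with only quadratic loss.

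Two smaller points.

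Your Jensen step runs the wrong way: concavity would bound the average face volume \emph{above} by the symmetric value. The paper first removes the anisotropy within each face by a second-order Taylor bound. It uses $q\ge s/3$, from Schur's inequality, to replace $\kappa$ by $\kappa_s$. It then proves that $s\mapsto s\,f(\kappa_s,sw)$ is \emph{convex}, so Jensen over the four faces gives a lower bound at $\bar s$.

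For the edges, a linear loss of only $\beta_0j_0D/2$ requires $\sum_e\beta_e$ to be stationary at first order. The individual exterior dihedral angles move by order $D$, so $J_0\sum_e\beta_e$ would otherwise carry an uncontrolled linear term. This is supplied by the Schl\"afli identity $\sum_e\ell_e\beta_e'=0$ together with $1-\ell_e=O(\delta_e)$, which gives $\sum_e\beta_e\ge6\beta_0-0.3D^2$. Your proposal does not address this.
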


\section{Constant width and a sharper edge-deficit estimate}\label{sec:constant-width}
A width-one body satisfies
\begin{equation}\label{eq:balls}
 K=\bigcap_{y\in K}B(y,1).
\end{equation}
Indeed its diameter is one. Conversely, if $z\notin K$, choose a unit separating direction $u$ and a point $y$ on the opposite supporting plane. The width identity gives $(z-y)\cdot u>1$, so $z\notin B(y,1)$. If $x\in\partial K$ has outer unit normal $n$, then
\begin{equation}\label{eq:opposite}
 x-n\in K.
\end{equation}
To see this, choose $y$ on the opposite supporting plane; $(x-y)\cdot n=1$ and $|x-y|\le1$ force $y=x-n$.

A minimum enclosing ball has balanced contacts $x_i=Ru_i$:
\[
 \sum_{i=1}^n\lambda_i x_i=0,\qquad \lambda_i>0,\quad
 \sum_i\lambda_i=1,\quad 2\le n\le4.
\]
The origin belongs to the convex hull of the contact set: otherwise a strict separating hyperplane gives a direction in which a sufficiently small translation of the center decreases the distance to every contact, while points away from the contact set retain positive radial slack. Caratheodory's theorem then supplies at most four contacts with positive balancing weights. Set $\eta=1/(2R^2)-1$. Since $u_i\cdot u_j\ge-\eta$, taking a scalar product with $u_i$ gives
\begin{equation}\label{eq:lambda}
 \lambda_i\le\frac{\eta}{1+\eta}=1-2R^2.
\end{equation}
Thus $1/2\le R\le\sqrt{3/8}$, and $R>1/\sqrt3$ requires four contacts. This also proves the needed case of Jung's inequality \cite{Jung}.

Write
\[
 \delta_{ij}=1-|x_i-x_j|^2,\quad D=\sum_{i<j}\delta_{ij},
 \quad d=\frac38-R^2,\quad b=\frac14\sum_i x_i.
\]
The elementary identity
\begin{equation}\label{eq:Didentity}
 D=16d+16|b|^2
\end{equation}
follows from $\sum_{i<j}|x_i-x_j|^2=16(R^2-|b|^2)$.
The weight bound implies $|b|\le6d$: the positive part of the four numbers $\lambda_i-1/4$ has sum at most $6d$, and a zero-sum combination of points of diameter one has norm at most this positive sum. Consequently $D\le16d+576d^2<1$ when $R\ge0.6$.

\begin{lemma}[Sharp deficit comparison]\label{lem:sharpD}
For four points on a common sphere, with $0\le\delta_{ij}$ and $D<1$,
\[
 |b|^2\le\frac{D^2}{16(2+D)}.
\]
For a balanced contact tetrahedron with $R\ge0.6$, this gives
\begin{equation}\label{eq:sharpD}
 \boxed{D\le\frac{16d}{1-8d}.}
\end{equation}
Equality in the first inequality is attained on the one-shortened-edge vertices of the deficit simplex.
\end{lemma}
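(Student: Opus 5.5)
The plan is to express $|b|^2$ as a convex function of the deficit vector $\delta=(\delta_{ij})$ and then maximize it over the simplex $\{\delta\ge0,\ \sum\delta_{ij}=D\}$. A convex function attains its maximum over a simplex at a vertex, so the bound reduces to an explicit computation for the configuration with one shortened edge.

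\emph{Setting up the representation.} Put $y_i=x_i-b$, so $\sum_i y_i=0$. Write $s_{ij}=1-\delta_{ij}$ for the squared edges. The centered Gram matrix $G=(y_i\cdot y_j)$ is the classical double-centering $G=-\tfrac12 PSP$, where $P=I-\tfrac14\mathbf 1\mathbf 1^{T}$. Hence $G$ is affine in $\delta$. Its diagonal gives $|y_i|^2$ as an affine function of $\delta$: one finds $|y_i|^2=\tfrac38-\tfrac14\Delta_i+\tfrac1{16}D$ with $\Delta_i=\sum_{j}\delta_{ij}$, and the regular tetrahedron gives $|y_i|^2=3/8$ as a check.

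The cospherical condition $|y_i+b|^2=R^2$ says $2\,y_i\cdot b=R^2-|b|^2-|y_i|^2$. After centering, $2\,y_i\cdot b=-h_i$, where $h_i=|y_i|^2-\tfrac14\sum_k|y_k|^2$ is again affine in $\delta$ (proportional to $\Delta_i-D/2$). Write $b=Y^{T}c$ with $c\perp\mathbf 1$, so that $Gc=-\tfrac12 h$. Then
\[
 |b|^2=c^{T}Gc=\tfrac14\,h^{T}G^{+}h,
\]
where $G^{+}$ is the inverse of $G$ on the sum-zero subspace.

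\emph{Nondegeneracy and convexity.} I must check that $G$ is positive definite on $\mathbf 1^\perp$ throughout the simplex. Near the regular tetrahedron this holds by continuity. To cover all of $D<1$, I would argue that a degenerate (planar) configuration of four points with all squared distances in $[1-D,1]$ and $D<1$ is impossible. This follows, for instance, from a Cayley--Menger determinant estimate, or from the fact that four points in a plane cannot have all six distances this close to equal.

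Given positive definiteness, the matrix-fractional function $(h,G)\mapsto h^{T}G^{-1}h$ is jointly convex on $\mathbb R^3\times\mathrm{PD}$. Composed with the affine map $\delta\mapsto(h,G)$ on $\mathbf 1^\perp$, this makes $|b|^2$ convex on the simplex. So the maximum is attained at a vertex, that is, at the configuration with one edge of squared length $1-D$ and five edges of length $1$.

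\emph{Vertex computation.} Take $x_1,x_2$ at distance $\sqrt{1-D}$ and the other edges of length $1$. By symmetry the circumcenter lies on the common perpendicular axis through the midpoints of $x_1x_2$ and $x_3x_4$. The centroid is the midpoint of the segment joining these two midpoints, and the circumcenter is found by equating the squared distances to $x_1$ and to $x_3$. A short computation should give $|b|^2=D^2/(16(2+D))$, which is the stated equality case.

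\emph{Consequence for balanced contacts.} For a balanced contact tetrahedron, $\delta_{ij}\ge0$ because $K$ has diameter one, and $D<1$ by the preceding estimate $D\le16d+576d^2$ for $R\ge0.6$. Inserting the first inequality into \eqref{eq:Didentity} gives $D\le16d+D^2/(2+D)$. Clearing denominators, this becomes $2D\le32d+16dD$, that is, $D(1-8d)\le16d$. Since $1-8d>0$, this is \eqref{eq:sharpD}.

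\emph{Expected obstacle.} The main obstacle is making the convexity step rigorous. One must verify positive definiteness of $G$ on $\mathbf 1^\perp$ on the whole simplex (the region where $D<1$), and then carry out the vertex algebra cleanly. If the Cayley--Menger route is awkward, I would instead bound the smallest eigenvalue of $G$ directly by a perturbation of $G_0$: its deviation is affine in $\delta$ with operator norm at most a constant times $D$, while $G_0$ has eigenvalue $1/2$ on $\mathbf 1^\perp$.
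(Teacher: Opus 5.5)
Your proposal is correct and follows the paper's proof essentially step for step. The shared steps are the double-centered Gram matrix $G=\tfrac12(P+P\Delta P)$, affine in the deficits; the equal-radius identities expressing $y_i\cdot b$ as the affine vector $z=(r-\tfrac D2\mathbf 1)/8$; the formula $|b|^2=z^{t}G^{-1}z$; joint convexity of the matrix-fractional function; and evaluation at a one-edge vertex. At that vertex the paper notes $Gz=\tfrac{2+D}{4}z$, while you compute on the symmetry axis, and both give $D^2/(16(2+D))$. For positive definiteness on the whole simplex, your perturbation fallback is exactly the paper's argument, with constant $1$: $\|P\Delta P\|_{\rm op}\le\|\Delta\|_{\rm op}\le\max_i r_i\le D$, so $G\succeq\tfrac12(1-D)P$ on $\mathbf 1^\perp$. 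This covers precisely the range $D<1$ and is cleaner than the planar-configuration route, whose ``all distances nearly equal'' heuristic is inaccurate, since a single deficit may be close to $1$.
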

\begin{proof}
Let $P=I-\frac14\mathbf1\mathbf1^t$, and let $\Delta$ have off-diagonal entries $\delta_{ij}$ and zero diagonal. On $H=\mathbf1^\perp$, the centered Gram matrix is
\[
 G=\frac12(P+P\Delta P).
\]
Since $\|\Delta\|_{\rm op}\le D<1$, $G$ is positive definite on $H$. Put $r_i=\sum_j\delta_{ij}$ and $z=(r-(D/2)\mathbf1)/8$. The equal-radius identities give
\[
 b\cdot(x_i-b)=z_i,\qquad |b|^2=z^t(G|_H)^{-1}z.
\]
For completeness, the diagonal entries of $G$ are $3/8-r_i/4+D/16$, whereas their average is $3/8-D/16$; subtracting the equal-radius identities proves the first formula.

For fixed $D$, the function in the last display is convex in the six deficits, because
\[
 z^tG^{-1}z=\sup_{v\in H}\{2z\cdot v-v^tGv\}
\]
is a supremum of affine functions. The deficit simplex has vertices with one deficit equal to $D$ and all others zero. At such a vertex, $|z|^2=D^2/64$ and $Gz=((2+D)/4)z$. Hence every vertex value is $D^2/[16(2+D)]$. Convexity proves the bound. Substitution in \eqref{eq:Didentity} gives $2D/(2+D)\le16d$, equivalent to \eqref{eq:sharpD}.
\end{proof}

\section{Volume of the tetrahedron and strengthened face regions}
Throughout the geometric estimates, assume $D\le0.06$. In particular all faces are acute: each squared side is at least $0.94$, and the sum of any two exceeds the third. Every face has squared circumradius $q\le1/3$: its largest angle lies in $[\pi/3,\pi/2)$, its longest side is at most one, and the sine law gives this bound. The balanced origin is strictly inside the nondegenerate tetrahedron, so every outward face plane has positive distance from the origin.

\begin{lemma}\label{lem:tet}
The tetrahedron satisfies
\[
 \Vol(T)\ge\frac{\sqrt{(1-D)(2+D)}}{12}.
\]
\end{lemma}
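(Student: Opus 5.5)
We will compute $\Vol(T)^2$ from the centered Gram matrix $G=\tfrac12(P+P\Delta P)$ on $H=\mathbf1^\perp$ introduced in the proof of \cref{lem:sharpD}. Then we minimize this expression over the deficit simplex by a concavity argument, in the same spirit as the convexity argument used there.

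\emph{Step 1: a volume formula.} Let $M$ be the Gram matrix of the edge vectors $x_a-x_4$, $a=1,2,3$. Then $\Vol(T)^2=\det M/36$. The centered Gram matrix is the Gram matrix of $x_i-c$, where $c$ is the centroid of the vertices. Its restriction to the three-dimensional space $H$ satisfies $\det(G|_H)=\tfrac14\det M$. This is an identity between Gram determinants of an affinely equivalent frame. We will check the constant on the regular unit tetrahedron: there $G|_H=\tfrac12 I$, so $\det(G|_H)=\tfrac18$, while $\det M=\tfrac12$. Hence
\[
 \Vol(T)^2=\frac{\det(G|_H)}{9}=\frac{1}{72}\det\bigl(I_H+P\Delta P|_H\bigr).
\]

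\emph{Step 2: concavity.} The map $\Delta\mapsto I_H+P\Delta P|_H$ is affine in the six deficits. By \cref{lem:sharpD}, it takes positive definite values on the whole simplex $\{\delta_{ij}\ge0,\ \sum\delta_{ij}=D\}$, because $\|\Delta\|_{\rm op}\le D<1$. Since $\log\det$ is concave on positive definite matrices, the function $\Delta\mapsto\log\det(I_H+P\Delta P|_H)$ is concave on this simplex. A concave function on a polytope attains its minimum at a vertex. By the symmetry of relabelling vertices, all vertices of the deficit simplex give the same value. So it suffices to evaluate at $\Delta=D(e_1e_2^t+e_2e_1^t)$, the configuration with one shortened edge.

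\emph{Step 3: the vertex computation.} We diagonalize $P\Delta P$ on $H$ using the basis $e_1-e_2$, $e_1+e_2-e_3-e_4$, $e_3-e_4$.
\begin{itemize}
 \item For $e_1-e_2$: $\Delta(e_1-e_2)=-D(e_1-e_2)$, which already lies in $H$, so the eigenvalue is $-D$.
 \item For $e_1+e_2-e_3-e_4$: $\Delta$ maps it to $D(e_1+e_2)$, and $P(e_1+e_2)=\tfrac12(e_1+e_2-e_3-e_4)$, so the eigenvalue is $D/2$.
 \item For $e_3-e_4$: $\Delta(e_3-e_4)=0$, so the eigenvalue is $0$.
\end{itemize}
Hence $\det(I_H+P\Delta P|_H)=(1-D)(1+D/2)$, and
\[
 \Vol(T)^2\ge\frac{(1-D)(2+D)}{144}.
\]
Taking square roots gives the lemma. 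Equality holds at the one-shortened-edge tetrahedra.

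The main point to verify carefully is the normalization in Step 1, i.e.\ the constant in $\det(G|_H)=\tfrac14\det M$. One route is to write $x_i-c$ in terms of the edge vectors through the linear map sending $H$ to $\R^3$ and compare determinants. Given that identity, the regular-tetrahedron check fixes the constant. The rest of the argument is the standard concavity of $\log\det$ combined with the explicit spectrum computed in Step 3.
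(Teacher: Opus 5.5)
Your proof is correct and is essentially the paper's argument: a Gram matrix that is affine in the six deficits, concavity of $\log\det$ on the positive definite image of the deficit simplex, and evaluation at the one-shortened-edge vertices. Your eigenvalues $-D$, $D/2$, $0$ reproduce the paper's vertex determinant $\det M=(1-D)(2+D)/4$.

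The only difference is that you use the permutation-symmetric centered Gram matrix on $H$ rather than the Gram matrix of three incident edge vectors. This makes the equality of all six vertex values automatic, at the cost of the normalization $\det(G|_H)=\tfrac14\det M$. That constant is exactly $\det(F^tF)=\det(I_3+\mathbf 1\mathbf 1^t)=4$ for $F=[e_1-e_4,\,e_2-e_4,\,e_3-e_4]$, which confirms your regular-tetrahedron check.
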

\begin{proof}
A Gram matrix of three incident edge vectors has diagonal entries $a,b,c$ and off-diagonal entries $(a+b-d)/2$, $(a+c-e)/2$, $(b+c-f)/2$, where $a,\ldots,f$ are the six squared lengths. This matrix is affine in the deficits. At a vertex of their simplex, direct calculation gives determinant
\[
 \frac12-\frac D4-\frac{D^2}{4}=\frac{(1-D)(2+D)}4.
\]
The six vertex matrices are positive definite for the present range of $D$. Concavity of $\log\det$ therefore bounds the determinant of every convex combination below by this common value. This concavity follows, for example, by differentiating twice along a line of positive matrices: the second derivative is $-\operatorname{tr}(G^{-1}G'G^{-1}G')\le0$. Finally $\Vol(T)=\sqrt{\det G}/6$.
\end{proof}

\subsection{Using the enclosing ball}
Let $F$ be a face with outward unit normal $n$ and plane $p\cdot n=h>0$. Its planar circumcenter is $hn$, and its squared circumradius is $q=R^2-h^2$. For $p\in F$, put
\[
 W(p)=R^2-|p|^2=q-|p-hn|^2,\qquad \kappa=R+h.
\]
\begin{lemma}[Circumradius-aware face region]\label{lem:faceinclusion}
The following entire region lies in $K$:
\begin{equation}\label{eq:newface}
 \left\{p+tn:p\in F,\quad
 0\le t\le\sqrt{\kappa^2+W(p)}-\kappa\right\}.
\end{equation}
\end{lemma}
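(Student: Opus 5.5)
The plan is to use the constant-width representation \eqref{eq:balls}. By it, a point $q$ lies in $K$ as soon as $|q-c|\le1$ for every $c\in K$. So we fix an arbitrary $c\in K$ and a point $q=p+tn$ of the region \eqref{eq:newface}, and we show $|q-c|^2\le1$. Only two facts about $c$ are needed. First, $c$ lies in the minimum enclosing ball, so $|c|\le R$, which gives $n\cdot c\ge-R$. Second, the diameter of $K$ is one and the contact vertices lie in $K$, so $|c-x_i|\le1$ for each vertex $x_i$ of the face $F$.

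First step: bound the distance from $c$ to the base point $p$. Write $p=\sum_i\mu_ix_i$ as a convex combination of the three vertices of $F$. The parallel-axis (variance) identity gives
\[
 |p-c|^2=\sum_i\mu_i|x_i-c|^2-\sum_i\mu_i|x_i-p|^2
 \le 1-\sum_i\mu_i|x_i-p|^2 .
\]
Since $|x_i|=R$ for every $i$, the same identity centered at the origin gives $\sum_i\mu_i|x_i-p|^2=R^2-|p|^2=W(p)$. Hence $|p-c|^2\le1-W(p)$. This is the step where the circumradius enters, and it is the crux of the argument; once it is seen, the rest is bookkeeping.

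Second step: add the normal displacement. Since $p\cdot n=h$, we get
\[
 |q-c|^2=|p-c|^2+2t\,n\cdot(p-c)+t^2\le 1-W(p)+2t(h+R)+t^2,
\]
where we used $n\cdot c\ge-R$ and $t\ge0$.

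Finally, $t\mapsto t^2+2\kappa t-W(p)$ is increasing for $t\ge0$ and vanishes at $t=\sqrt{\kappa^2+W(p)}-\kappa$. Therefore it is nonpositive on the whole allowed interval $0\le t\le\sqrt{\kappa^2+W(p)}-\kappa$, and so $|q-c|\le1$. Because $c\in K$ was arbitrary, \eqref{eq:balls} gives $q\in K$. Note that $W(p)\ge0$ on $F$: the face lies inside its planar circumcircle, since $q=R^2-h^2$ and $F$ is acute. So the square root is real and the region is well defined. I expect no real obstacle beyond spotting the variance identity that turns the three vertex constraints into the quantity $W(p)$.
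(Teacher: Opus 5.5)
Your proof is correct and is essentially the paper's argument: averaging the vertex constraints $|c-x_i|\le1$ with the barycentric weights $\mu_i$ (your variance identity) gives $|c-p|^2\le 1-R^2+|p|^2=1-W(p)$, and the bound $n\cdot c\ge -R$ then handles the normal displacement exactly as in the paper. As a minor point, $W(p)\ge0$ needs no acuteness, since $|p|\le R$ by convexity of the ball (indeed your own identity gives $W(p)=\sum_i\mu_i|x_i-p|^2\ge0$), and you reuse the letter $q$ both for the new point and for the squared circumradius.
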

\begin{proof}
Express $p=\sum_i\mu_i x_i$ using the face vertices. For $y\in K$, the inequalities $|y-x_i|\le1$ imply
\[
 |y|^2-2p\cdot y+R^2\le1.
\]
For $x=p+tn$, use $n\cdot y\ge-R$ to get
\[
 |x-y|^2\le1-R^2+|p|^2+t^2+2t(R+h)\le1
\]
whenever $t$ is in \eqref{eq:newface}. Equation \eqref{eq:balls} proves the inclusion.
\end{proof}
These face regions project orthogonally into their respective faces. We next estimate their total volume, denoted $\sum_F I_F^{\rm new}$.

\subsection{Symmetrizing the faces}
Let $a_1,a_2,a_3$ be the squared lengths of a face, $\tau=3-\sum_i a_i$, $s=1-\tau/3$, and $v_i=a_i-s$. Write $A$ for its area and $A_0=\sqrt3/4$. Heron's formula and the circumradius formula give
\[
 16A^2=3s^2-2\sum_i v_i^2,\qquad q\ge\frac{s}{3},
 \qquad A_0s-0.1\tau^2\le A\le A_0s.
\]
To justify $q\ge s/3$, put $P=a_1+a_2+a_3$ and $H=2\sum_{i<j}a_ia_j-\sum_i a_i^2$; the inequality $9a_1a_2a_3\ge PH$ is Schur's degree-three inequality. For $a\ge b\ge c$, its difference is
$(a-b)^2(a+b-c)+c(a-c)(b-c)\ge0$.
The area bound follows from $\sum v_i^2\le2\tau^2/3$ and $\sqrt H>5/3$ when $\tau\le0.06$.

For a uniform barycentric point, let
\[
 W_0=\mu_1\mu_2+\mu_2\mu_3+\mu_3\mu_1,
 \qquad W=\sum_{i=1}^3a_i\mu_j\mu_k
\]
(the index $i$ denotes the side opposite $\mu_i$). Define
\[
 \kappa_s=R+\sqrt{R^2-s/3},\qquad
 f(\kappa,W)=\sqrt{\kappa^2+W}-\kappa.
\]
Because $q\ge s/3$, the actual $\kappa$ is at most $\kappa_s$, so the new face volume is at least $A\mathbb E f(\kappa_s,W)$.

For $R\ge0.609$, all these $\kappa_s$ exceed $0.8$. Since
$f_{WW}=-1/[4(\kappa^2+W)^{3/2}]$, Taylor's theorem gives
\[
 \mathbb E f(\kappa_s,W)
 \ge\mathbb E f(\kappa_s,sW_0)-\frac{\tau^2}{2160\kappa_s^3}.
\]
Indeed the linear term vanishes by permutation symmetry and $\sum v_i=0$, while the barycentric moments imply
\[
 \mathbb E(W-sW_0)^2=\frac{\sum_i v_i^2}{180}\le\frac{\tau^2}{270}.
\]
Moreover $\mathbb E f(\kappa_s,sW_0)\le\sqrt{0.8^2+1/4}-0.8<0.15$ by concavity. It follows that
\begin{equation}\label{eq:faceanisotropy}
 I_F^{\rm new}\ge A_0s\,\mathbb E f(\kappa_s,sW_0)-0.0154\tau^2.
\end{equation}
Here $A_0/(2160(0.8)^3)<0.0004$, explaining the displayed error constant.

For fixed $R$ and $w\in[0,1/3]$, the function
\[
 s\longmapsto s\,f\bigl(R+\sqrt{R^2-s/3},sw\bigr)
\]
is convex. To check this directly, abbreviate its inner $f$ by $g(s)$ and $\kappa_s$ by $\kappa$. The identity $g^2+2\kappa g=sw$ gives
\[
 g'=\frac{w/2-\kappa'g}{g+\kappa},\quad 0\le g'\le-\kappa',\qquad
 (g+\kappa)g''=-(g')^2-2\kappa'g'-\kappa''g\ge0.
\]
The middle inequality uses $w\le\kappa/(3\sqrt{R^2-s/3})$; its right side is at least $2/3$. Thus $(sg)''\ge0$.

Set $\bar s=1-D/6$. Each edge belongs to two faces, so $\sum_F\tau_F=2D$ and $\sum_F\tau_F^2\le2D^2$. Jensen's inequality and \eqref{eq:faceanisotropy} give
\begin{equation}\label{eq:facesfirst}
 \sum_F I_F^{\rm new}\ge
 4A_0\bar s\,\mathbb E f(\kappa_{\bar s},\bar s W_0)-0.031D^2.
\end{equation}

\subsection{Comparison with the regular face}
Let $I_{\rm reg}=A_0\mathbb E[\sqrt{2/3+W_0}-\sqrt{2/3}]$.
By Lemma~\ref{lem:sharpD} and \eqref{eq:Didentity},
\[
 R^2=\frac38\bar s+|b|^2,\qquad
 |b|^2\le\frac{D^2}{16(2+D)}\le\frac{D^2}{32}.
\]
Writing $h_0=\sqrt{2/3}$, rationalization gives
\[
 0\le\kappa_{\bar s}-h_0\sqrt{\bar s}
 \le\frac{4h_0}{\sqrt{\bar s}}|b|^2.
\]
Since $-f_\kappa=1-\kappa/\sqrt{\kappa^2+W}\le W/(2\kappa^2)$, its expectation for $W=\bar sW_0$ and $\kappa\ge h_0\sqrt{\bar s}$ is at most $3/16$. Therefore \eqref{eq:facesfirst} implies
\begin{equation}\label{eq:facesfinal}
 \boxed{\sum_F I_F^{\rm new}\ge4I_{\rm reg}(1-D/6)^{3/2}-0.065D^2.}
\end{equation}
The extra error is at most $(3\sqrt2/128)D^2$, and $0.031+3\sqrt2/128<0.065$.

The small numerical input $I_{\rm reg}>I_*$ has a short finite proof. Put $Q=1/3-W_0$. Its first five moments are
\[
 \mathbb E Q=\frac1{12},\quad \mathbb E Q^2=\frac1{90},\quad
 \mathbb E Q^3=\frac{29}{15120},\quad
 \mathbb E Q^4=\frac{11}{28350},\quad
 \mathbb E Q^5=\frac{13}{149688}.
\]
These follow by expanding $Q^j$ and using
\[
 \mathbb E\mu_1^a\mu_2^b\mu_3^c=\frac{2a!b!c!}{(a+b+c+2)!}.
\]
If $c_j=\binom{2j}{j}/[4^j(2j-1)]$, the binomial expansion and $0\le Q\le1/3$ give
\[
 \sqrt{1-Q}\ge1-\sum_{j=1}^4c_jQ^j-\frac32c_5Q^5.
\]
Consequently
\[
 I_{\rm reg}\ge\frac{\sqrt3}{4}
 \left(1-\sqrt{\frac23}-\sum_{j=1}^4c_j\mathbb E Q^j
       -\frac32c_5\mathbb E Q^5\right)
 >I_*.
\]
The rational bounds $1732050807/10^9<\sqrt3$ and $\sqrt{2/3}<816496581/10^9$, verified by squaring, make this last comparison an exact rational inequality.

\section{Edge spindles and the previously uncounted sectors}
\subsection{Spindles and projection disjointness}
For an edge of length $\ell$, take its midpoint as origin and endpoints $\pm(\ell/2)e$. Set
\[
 b_\ell=\sqrt{1-\ell^2/4},\quad
 t_0(z)=\sqrt{1-z^2}-b_\ell\quad (|z|\le\ell/2),
\]
\[
 J(\ell)=\int_{-\ell/2}^{\ell/2}t_0(z)^2\dd z
 =\ell-\frac{\ell^3}{12}-2b_\ell\arcsin(\ell/2).
\]
Every unit ball containing the endpoints contains the spindle $ze+tv$, where $v\perp e$, $|v|=1$, and $0\le t\le t_0(z)$. Indeed, if its center is $y=qe+y_\perp$, endpoint containment gives $|y|^2+\ell|q|\le b_\ell^2$, and
\[
 |ze+tv-y|^2\le z^2+(t+b_\ell)^2\le1.
\]
Retain directions $v$ in the exterior normal cone of $T$ at that edge. Its angular measure is the exterior dihedral angle $\beta_e$, so this region has volume $\beta_eJ(\ell_e)/2$.

Every point in an edge region has its nearest point on $T$ on that edge. The new face regions have nearest point in their respective faces. Unique nearest-point projection proves disjointness, except on null boundaries, both between these regions and with $T$ itself.

\subsection{Quantitative angle control}
We claim, for $D\le0.06$,
\begin{equation}\label{eq:angles}
 \beta_e\le\beta_0+1.1D,\qquad
 \sum_e\beta_e\ge6\beta_0-0.3D^2,\qquad \beta_e>\pi/2.
\end{equation}
Here are explicit derivative bounds proving the claim. Interpolate squared lengths as $1-t\delta_e$. For an edge of squared length $a$ put
\[
 u=(a+b-d)/2,\quad v=(a+c-e)/2,\quad w=(b+c-f)/2,
 \quad N=aw-uv,\quad P=ab-u^2,\quad Q=ac-v^2.
\]
Then $\cos\beta=-N/\sqrt{PQ}$. If the six deficits are $A,B,C,D_1,E,F_1$, expansion gives
\begin{align*}
 N={}&\frac14-\frac{B+C+D_1+E}{4}+\frac{F_1}{2}
 -\frac{A^2}{4}+\frac{A(B+C+D_1+E)}4-\frac{AF_1}{2}\\
 &-\frac{BC}{4}+\frac{BE}{4}+\frac{CD_1}{4}-\frac{D_1E}{4},\\
 P={}&\frac34-\frac{A+B+D_1}{2}
 -\frac{A^2+B^2+D_1^2}{4}+\frac{AB+AD_1+BD_1}{2}.
\end{align*}
On the nonnegative deficit simplex of total at most $0.06$, these formulas give
\[
 0<N\le0.2818,\quad P,Q\ge0.7191,\quad
 |N'|\le0.5D,\quad |P'|,|Q'|\le0.53D.
\]
For example the absolute partial derivatives of $N$ are bounded by $0.5$; those of $P,Q$ by $0.53$. Thus $0<N/\sqrt{PQ}<0.392$, $\sin\beta>0.919$, and
\[
 |\beta'|\le\frac{0.5+0.392(0.53)}{0.7191(0.919)}D<1.1D.
\]
The Euclidean Schlaefli identity $\sum_e\ell_e\beta'_e=0$ now gives
\[
 \left|\frac\dd{\dd t}\sum_e\beta_e\right|
 \le \frac{25}{49}(1.1)tD^2,
\]
because $1-\ell_e(t)\le(25/49)t\delta_e$. Integration proves \eqref{eq:angles}.
One can verify the identity directly: differentiate the angles between adjacent face normals, group by faces, and use that each face's length-weighted outward edge conormals sum to zero.

Differentiation under the integral for $J$ gives
\[
 \frac\dd{\dd q}J(\sqrt q)
 =\frac{\arcsin(\ell/2)-\ell b_\ell/2}{4b_\ell}.
\]
This is nonnegative and increasing for $\ell\le1$, with value $j_0$ at $\ell=1$. Hence $0\le J_0-J(\ell_e)\le j_0\delta_e$ and \eqref{eq:angles} yields
\begin{equation}\label{eq:edgefinal}
 \boxed{\frac12\sum_e\beta_eJ(\ell_e)
 \ge3\beta_0J_0-\frac{\beta_0j_0}{2}D-0.016D^2.}
\end{equation}
Indeed the rational bounds in \cref{lem:constants} give $J_0<0.0098$ and $j_0<0.0262$, so
$0.15J_0+0.55j_0<0.15(0.0098)+0.55(0.0262)<0.016$.

\subsection{Extending the spherical face patches past their projections}
Consider an acute face and one of its edges. Use coordinates with that edge's midpoint at zero, unit axis $e$, inward in-face unit vector $f$, and outward normal $n$. Its planar circumcenter is $kf$, its squared circumradius is $q$, and put
\[
 k^2=q-\ell^2/4,\qquad h=\sqrt{1-q},\qquad
 b=\sqrt{1-\ell^2/4},\qquad \phi_F=\arctan(k/h).
\]
Then $k=b\sin\phi_F$ and $h=b\cos\phi_F$. The unit sphere centered at $kf-hn$ passes through the three face vertices. Its spherical triangle obtained by radial projection of the planar face is contained in every unit ball containing those vertices. To prove this, let the three unit radius vectors be $v_i$. A point $v$ of this spherical triangle satisfies $\sum\mu_i v_i=\lambda v$ with $0<\lambda\le1$. For any other ball center, writing $y$ relative to the sphere center gives $2y\cdot v_i\ge|y|^2$, so $2\lambda y\cdot v\ge|y|^2$. Since $0<\lambda\le1$, this implies $2y\cdot v\ge|y|^2$, and hence $|y-v|\le1$.

For $0\le\phi\le\phi_F$, let $v=\cos\phi\,n-\sin\phi\,f$. The above inclusion forces, over the edge point $ze$,
\[
 0\le t\le t_F(z,\phi)
 =\sqrt{A(\phi)^2+\ell^2/4-z^2}-A(\phi),
 \qquad A(\phi)=h\cos\phi+k\sin\phi.
\]
Here is an explicit check that the spherical point belongs to that triangle. Its radial projection onto the face plane has the form
\[
 \lambda ze+
 \left(1-\lambda-\frac{\lambda t\sin\phi}{k}\right)kf
 +\frac{\lambda t\sin\phi}{k}\,0,
 \qquad \lambda=\frac h{h+t\cos\phi}.
\]
The three coefficients are nonnegative and sum to one precisely when $\tan\phi\le k/h$. The projected point therefore belongs to the face.

The two such flanks at an edge are disjoint: each $\phi_F<0.38$, while $\beta_e>\pi/2$. Indeed $q\le1/3$ and $b^2\le0.765$ give $\sin^2\phi_F\le1-(2/3)/0.765<\sin^2(0.38)$, where $\sin(0.38)\ge0.38-(0.38)^3/6$ suffices for the last comparison. They lie in the edge normal cone: in the plane perpendicular to the edge, $-f$ is the outward conormal of the face, and the adjacent face normal is reached from $n$ by rotating toward $-f$ through the exterior dihedral angle $\beta_e$; here $0\le\phi\le\phi_F<\beta_e$. The two flanks at the same edge occupy opposite ends of that normal-cone wedge, and $2(0.38)<\pi/2<\beta_e$, so their interiors are disjoint. They can therefore be added to the already counted spindle without overlapping any face region. Only the additional heights beyond $t_0(z)$ are counted.

For fixed $\ell$ define
\[
 J_A=\int_{-\ell/2}^{\ell/2}
       (\sqrt{A^2+\ell^2/4-z^2}-A)^2\dd z.
\]
For $h\le A\le b$, differentiation gives $-J_A'\ge2J_b=2J(\ell)$, because the square-root denominator is at most one and the integrand height is at least the height at $A=b$. Thus the additional volume on this flank is at least
\begin{equation}\label{eq:flank}
 \frac12\int_0^{\phi_F}(J_{A(\phi)}-J_b)\dd\phi
 \ge J(\ell)(b\phi_F-k).
\end{equation}

Let $\tau$ be the face's total deficit and $\delta=1-\ell^2$. Since $q\ge1/3-\tau/9$ and $b\arcsin(k/b)-k$ increases with $k$, the right side is at least
\[
 E(\delta,\tau)=J(\sqrt{1-\delta})[b\phi-k],
 \quad b^2=\frac34+\frac\delta4,\quad
 k^2=\frac1{12}-\frac\tau9+\frac\delta4,\quad
 \phi=\arcsin(k/b).
\]
We record all bounds needed to simplify this expression. For $0\le\delta\le\tau\le0.06$, one has $b\le7/8$, $\phi<0.36$, $J(\sqrt{1-\delta})>0.008$, and $J_q<0.0262$. The last two assertions follow from the monotonicity of $J_q$, together with the rational consequences $J_0>0.0097$ and $j_0<0.0262$ of \cref{lem:constants}: indeed
$J(\sqrt{1-\delta})\ge J_0-j_0\delta>0.0097-0.0262(0.06)>0.008$.
Holding $\tau$ fixed makes $h^2=b^2-k^2$ fixed, and direct differentiation gives
\[
 E_\delta=-J_q b(\phi-\sin\phi)
       +\frac{J}{8b}\bigl(\phi-\tan(\phi/2)\bigr)\ge0.
\]
Indeed $\phi-\sin\phi\le\phi^3/6$ and $\tan(\phi/2)\le0.51\phi$; the latter follows from $\sin x\le x$, $\cos x\ge1-x^2/2$, and $x\le0.18$. Also
\[
 \frac{0.008(0.49)}{8(7/8)}
 -\frac{0.0262(7/8)(0.36)^2}{6}>0.
\]
The asserted angle range follows by first decreasing $\tau$ to $\delta$, then increasing $\delta$ to $0.06$, which gives $k^2/b^2\le11/90$, and using
$\sin(0.36)\ge0.36-(0.36)^3/6>\sqrt{11/90}$. At $\delta=0$,
\[
 |E_\tau(0,\tau)|=\frac{J_0 k}{18h(b_0+h)}
 \le\frac{J_0 k_0}{18\sqrt{2/3}(b_0+\sqrt{2/3})}<0.000115.
\]
For the last inequality it is enough to use
$J_0<0.0098$, $k_0<0.289$, $\sqrt{2/3}>0.816$, and
$b_0+\sqrt{2/3}>1.682$; each is an immediate rational consequence of \cref{lem:constants} or of squaring.
Therefore $E(\delta,\tau)\ge E_0-0.000115\tau$. There are twelve face--edge incidences, and the sum of their face deficits is $6D$. Their total additional volume is at least
\begin{equation}\label{eq:flankfinal}
 \boxed{12E_0-0.00069D.}
\end{equation}
Adding Lemma~\ref{lem:tet}, \eqref{eq:facesfinal}, \eqref{eq:edgefinal}, and \eqref{eq:flankfinal} proves Theorem~\ref{thm:geometric} and \eqref{eq:B}.

\section{The remaining circumradii: a fully analytic radial estimate}\label{sec:radial}
We now treat $R\le R_0$ without a finite numerical certificate. We first rederive the radial inequality in the form needed below. Let $\mathcal A(K)$ denote surface area and $\sigma$ spherical area measure. Since $K\subset B(0,R)$ and $p(u)+p(-u)=1$, every support value relative to the circumball center is at least $1-R$; hence $B(0,1-R)\subset K$. In particular the origin is an interior point. Let
\[
 \rho(u)=\max\{r\ge0:ru\in K\},\qquad u\in\mathbb S^2,
\]
be the radial function.

Blaschke's identity for a width-one body is
\begin{equation}\label{eq:blaschke-radial}
 \mathcal A(K)=2\Vol(K)+\frac{2\pi}{3}.
\end{equation}
It follows from $K+(-K)=B(0,1)$ by expanding mixed volumes, or from the support-function formula; these identities apply without smoothness assumptions \cite{Schneider}. For $0<m\le1-R^2$, the opposite-point identity \eqref{eq:opposite} and $K\subset B(0,R)$ give, at almost every boundary point with outer normal $n$,
\[
 |x-n|^2\le R^2,
 \qquad\text{hence}\qquad
 2x\cdot n\ge |x|^2+1-R^2\ge |x|^2+m.
\]
Thus the vector field $F_m(x)=2x/(|x|^2+m)$ has boundary flux at least one. Since
\[
 \operatorname{div}F_m(x)=\frac{2(|x|^2+3m)}{(|x|^2+m)^2},
 \qquad
 \int_0^r\frac{2(t^2+3m)t^2}{(t^2+m)^2}\,\dd t
 =\frac{2r^3}{r^2+m},
\]
the divergence theorem (valid for convex Lipschitz domains) and radial integration give
\begin{equation}\label{eq:radial-flux}
 \mathcal A(K)\le\int_{\mathbb S^2}\frac{2\rho^3}{\rho^2+m}\,\dd\sigma,
 \qquad
 3\Vol(K)=\int_{\mathbb S^2}\rho^3\,\dd\sigma.
\end{equation}
For $s>0$ put
\begin{equation}\label{eq:abG}
 a=\frac{2(s^2+3m)}{3(s^2+m)^2},\qquad
 b=\frac{4s^5}{3(s^2+m)^2},\qquad
 G(v)=av^3+b-\frac{2v^3}{v^2+m}.
\end{equation}
The displayed formula for $b$ is the simplification of the tangent intercept. The function
$\phi(X)=2X/(X^{2/3}+m)$ is strictly concave, and $aX+b$ is its tangent at $X=s^3$. Hence $G(v)=av^3+b-\phi(v^3)$ is nonnegative, decreases on $(0,s]$, and increases on $[s,\infty)$. Equivalently,
\[
 \frac{2v^3}{v^2+m}=av^3+b-G(v).
\]
Thus \eqref{eq:radial-flux} implies
$\mathcal A(K)\le3a\Vol(K)+4\pi b-\int G(\rho)\,\dd\sigma$.
If $3a-2>0$, combining this with \eqref{eq:blaschke-radial} gives
\begin{equation}\label{eq:transport-analytic}
 \Vol(K)\ge
 \frac{(2/3-4b)\pi+\int_{\mathbb S^2}G(\rho)\,\dd\sigma}{3a-2}.
\end{equation}
This is the transport/tangent-majorant mechanism of Hyra \cite{Hyra2026}.

\subsection{Circumradius at most 0.604}
Let
\[
 R_1=\frac{151}{250},\qquad
 m_1=1-R_1^2,\qquad s_1=\frac{231}{500}.
\]
If $R\le R_1$, then $m_1\le1-R^2$. Discarding the nonnegative $G$ term in \eqref{eq:transport-analytic} gives the exact bound
\begin{equation}\label{eq:small-analytic}
 \Vol(K)\ge
 \frac{14514568959}{110387193625}\,\pi
 >0.4130810985
 >\frac{1651}{4000}.
\end{equation}
The last inequality follows already from the lower bound $\pi>103993/33102$ in \cref{lem:constants}.

\subsection{Closed-form contact and antipodal profiles}
For the rest of this section suppose
\[
 R_1\le R\le R_0:=\frac{152597}{250000}.
\]
Set
\begin{equation}\label{eq:radial-parameters}
 c=\frac{571}{5000},\qquad
 m=1-R^2,\qquad s=R-c,
\end{equation}
and
\[
 \alpha=\frac{1-2R^2}{2R},\qquad
 \gamma=\sqrt{R^2-\alpha^2}.
\]
If $Ru\in K$ is a circumsphere contact and $\theta=d(u,v)$, define
\begin{equation}\label{eq:positive-profile}
 A(\theta)=\alpha\cos\theta+\gamma\sin\theta,
 \qquad
 L(\theta)=\sqrt{m+A(\theta)^2}-A(\theta).
\end{equation}
Then $\rho(v)\ge L(\theta)$ while $A$ is increasing. The proof is the two-ball argument used in Hyra's construction, but we recall it because the exact form will be used below. Put
\[
 y_0=-\alpha u-\gamma w,
 \qquad v=\cos\theta\,u+\sin\theta\,w,
 \qquad x=L(\theta)v.
\]
Then $|y_0|=R$, $|y_0-Ru|=|y_0-x|=1$. More explicitly,
\[
 y_0-x=\lambda y_0+\mu(y_0-Ru),
 \qquad
 \mu=\frac{L(\theta)}R
 \left(\cos\theta-\frac{\alpha}{\gamma}\sin\theta\right),
\]
and
\[
 \lambda+\mu=1+\frac{L(\theta)\sin\theta}{\gamma}.
\]
On the range used below $A$ is increasing, equivalently
$\cos\theta-(\alpha/\gamma)\sin\theta\ge0$, so $\mu\ge0$.
Also $L(\theta)\le R$ and hence
$\lambda=1-(L/R)\cos\theta+
(L\sin\theta/\gamma)(1+\alpha/R)\ge0$; moreover
$\lambda+\mu\ge1$.
For $y\in K$ put $z=y-y_0$. The two inequalities
$|y|\le R$ and $|y-Ru|\le1$ give
\[
 |z|^2+2z\cdot y_0\le0,
 \qquad
 |z|^2+2z\cdot(y_0-Ru)\le0.
\]
Multiplying by $\lambda$ and $\mu$ and adding yields
$|z|^2+2z\cdot(y_0-x)\le0$, because
$\lambda+\mu\ge1$. Since $|y_0-x|=1$, this is exactly
$|y-x|\le1$. Equation \eqref{eq:balls} therefore gives $x\in K$.

At the antipode, the diameter constraint gives the upper profile
\begin{equation}\label{eq:negative-profile}
 \rho(v)\le U(\theta):=
 \sqrt{1-R^2\sin^2\theta}-R\cos\theta,
 \qquad \theta=d(-u,v).
\end{equation}
Here $L$ decreases and $U$ increases on the ranges used below.

There are four balanced contacts. Put
\[
 \eta=\frac1{2R^2}-1,
 \qquad
 T=\frac{4\eta^2}{1-\eta}-1,
 \qquad
 c_{\rm sep}=\sqrt{\frac{1+T}{2}}
              =\sqrt{\frac{2\eta^2}{1-\eta}}.
\]
Contact balance gives $u_i\cdot u_j\le T$ for $i\ne j$, exactly as in Hyra: if $A_\lambda=\lambda_i+\lambda_j$, then
$A_\lambda(1+u_i\cdot u_j)\le2\eta(1-A_\lambda)$, while the weight bound \eqref{eq:lambda} gives $A_\lambda\ge(1-\eta)/(1+\eta)$. Hence spherical caps of angular radius at most $\arccos(c_{\rm sep})$ around the four $u_i$ are pairwise disjoint; the same holds around the four antipodes.

We next check that the portions of the profiles used below stay inside these disjointness radii. Since $R\ge R_1$,
\[
 c_{\rm sep}^2\le
 \frac{71385601}{163619976}<\frac49,
\]
so $\arccos(c_{\rm sep})>0.8$ because $\cos(0.8)>1-0.8^2/2>2/3$. On the other hand $\alpha>0.2$, $\gamma>0.56$, $\cos(0.8)>0.68$, and $\sin(0.8)>0.71$, so $A(0.8)>0.5336$. Since $m<0.636$,
\[
 L(0.8)<\sqrt{0.636+0.5336^2}-0.5336<0.426<s.
\]
Thus the positive cap reaches the level $L=s$ before the disjointness threshold.

For the antipodal cap, the level $U=s$ has cosine
\begin{equation}\label{eq:cminus}
 c_-:=\frac{m-s^2}{2sR}.
\end{equation}
A direct factorization gives
\begin{equation}\label{eq:sep-factor}
 c_-^2-c_{\rm sep}^2=
 -\frac{Q_1(R)Q_2(R)}
 {10^8R^2(2R-1)(2R+1)(5000R-571)^2},
\end{equation}
where
\[
 Q_1(R)=38580000R^2+6362082R-25326041,
\]
\[
 Q_2(R)=61420000R^2+5057918R-25326041.
\]
Both quadratics are increasing. At the endpoints,
\[
 Q_1(R_0)=-\frac{11045025711557}{1562500}<0,
 \qquad
 Q_2(R_1)=\frac{16992524}{125}>0.
\]
Hence $c_->c_{\rm sep}$ throughout the interval, so the antipodal cap also reaches $U=s$ before its disjointness threshold. A contributing positive and negative cap cannot overlap, because they would force simultaneously $\rho>s$ and $\rho<s$.

Let $\theta_+$ be the first angle for which $L(\theta_+)=s$ and
$\theta_-$ the first angle for which $U(\theta_-)=s$. On the four
positive caps $0\le\theta\le\theta_+$ we have $\rho\ge L\ge s$;
since $G$ is increasing above $s$, $G(\rho)\ge G(L)$. On the four
antipodal caps $0\le\theta\le\theta_-$ we have $\rho\le U\le s$;
since $G$ is decreasing below $s$, $G(\rho)\ge G(U)$. Thus the
profile integrals below are genuine lower bounds for disjoint parts of
$\int_{\mathbb S^2}G(\rho)\,\dd\sigma$.

\subsection{The baseline plus the antipodal cap}
Put
\begin{equation}\label{eq:qH}
 q=s^2+m,
 \qquad
 H=s^2+3m-q^2.
\end{equation}
Then $3a-2=2H/q^2$. The following factorization will be used for both caps:
\begin{equation}\label{eq:G-factor}
 \frac{G(v)(m+v^2)}{v^2}
 =\frac{2(s-v)^2}{3q^2v^2}\,P(v),
\end{equation}
where
\begin{equation}\label{eq:Pv}
 P(v)=2ms^3+4ms^2v+6msv^2+3mv^3+2s^3v^2+s^2v^3.
\end{equation}
For the negative profile, changing variables from $\cos\theta$ to $v=U(\theta)$ gives
\begin{equation}\label{eq:Iminus}
 I_-(R):=\int_0^{\theta_-}G(U(\theta))\sin\theta\,\dd\theta
 =\int_{1-R}^{s}G(v)\frac{m+v^2}{2Rv^2}\,\dd v.
\end{equation}
An antiderivative is
\begin{equation}\label{eq:antiderivative}
 \mathcal F_R(v)=\frac1{2R}
 \left(\frac{amv^2}{2}+\frac{av^4}{4}-\frac{bm}{v}+bv-v^2\right).
\end{equation}
Thus, after division by $3a-2$, the tangent baseline together with the four antipodal caps contributes to the right side of \eqref{eq:transport-analytic} at least
\begin{equation}\label{eq:Cdef}
 \pi C(R),\qquad
 C(R)=\frac{q^2-8s^5}{3H}
 +\frac4{3RH}\int_{1-R}^{s}
     \frac{(s-v)^2P(v)}{v^2}\,\dd v.
\end{equation}

\begin{lemma}\label{lem:Cmonotone}
The function $C(R)$ is strictly decreasing on $[R_1,R_0]$.
\end{lemma}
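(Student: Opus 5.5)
The function $C(R)$ is a rational function of $R$, and I plan to prove monotonicity by writing $C'(R)$ as a single rational function and checking the sign of its numerator on the short interval $[R_1,R_0]$, whose width is about $0.0064$.

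\emph{Step 1: $C$ has no logarithmic part.} Expanding $(s-v)^2P(v)/v^2$ with \eqref{eq:Pv} gives a term $2ms^5/v^2$ and a polynomial of degree at most $3$ in $v$. The $1/v$ coefficient is
\[
 4ms^4-2s\cdot 2ms^3=0 .
\]
So there is no logarithm, and this is consistent with the rational antiderivative \eqref{eq:antiderivative}. Integrating from $1-R$ to $s$ with $m=1-R^2$ and $s=R-c$ expresses $C(R)$ in closed form as a rational function of $R$. By \eqref{eq:qH}, its denominators are $3H$, $3RH$ and powers of $1-R$ coming from $-2ms^5/v$ at $v=1-R$. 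Note that $H>0$ on the interval, because $3a-2>0$ there, equivalently $H=q^2(3a-2)/2>0$.

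\emph{Step 2: organize the derivative.} I would differentiate using Leibniz's rule rather than expanding everything at once, since this keeps the signs visible.
\begin{itemize}
\item The upper limit $v=s$ contributes nothing, because the integrand vanishes to second order there.
\item The lower limit contributes $+\frac{4}{3RH}(s-1+R)^2P(1-R)/(1-R)^2$. This term is positive but small, since $s-(1-R)=2R-1-c\approx 0.09$.
\item The interior derivative comes from $\partial_R$ of $m$ and $s$ inside $(s-v)^2P(v)$. Here $\partial_R m=-2R$ and $\partial_R s=1$.
\item The prefactor derivatives involve $\partial_R(1/H)$ and $\partial_R(1/R)$.
\end{itemize}
Clearing the positive denominators reduces $C'(R)<0$ to $N(R)<0$ for an explicit polynomial $N$ with rational coefficients in $R$, the coefficients depending on $c=571/5000$.

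\emph{Step 3: sign of $N$ on the interval.} I would substitute $R=R_1+\varepsilon$ with $0\le\varepsilon\le R_0-R_1=1597/250000$ and expand $N(R_1+\varepsilon)=\sum_k n_k\varepsilon^k$ with exact rational $n_k$. I then expect $n_0<0$ to dominate: the bound $\sum_{k\ge1}|n_k|(R_0-R_1)^k<|n_0|$ should hold with ample room, because the interval is so short. If $n_0$ turns out to be only marginally negative, the fallback is to subdivide the interval once, or to expand at both endpoints.

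\emph{Main obstacle.} The hard part is the bookkeeping in producing $N$ exactly, since it has moderate degree (around ten) and messy rational coefficients. The sign check itself should be routine once those coefficients are in hand. As a structural cross-check I would compare the $\varepsilon$-independent part with the heuristic that the main term $(q^2-8s^5)/(3H)$ already decreases. The antipodal integral should then only perturb this, its growth with $R$ being controlled by the small factor $(2R-1-c)^3$.
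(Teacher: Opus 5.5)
Your plan is correct, but it certifies the sign of $C'$ by a different mechanism than the paper. Your Step~1 is right: in fact
\[
(s-v)^2P(v)/v^2=2ms^5v^{-2}+2s^5-(5ms^2+3s^4)v+(3m+s^2)v^3 .
\]
Since $q=1-2cR+c^2$ is linear and $H$ is quadratic in $R$, the numerator of $C'$ has degree at most ten, as you estimate. The margin is also large: $C'<-0.16$, and $C'$ varies only mildly over an interval of width $0.0064$. So your Taylor test at $R_1$, or one subdivision, would go through.

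The paper never forms $C'$ as one rational function. Instead it turns your closing cross-check into the proof. It writes $C=n/(3H)+A_RJ_R$ with $n=q^2-8s^5$ and $A_R=4/(3RH)$. After the substitution $v=s-w$ it writes $J_R=\int_0^Lw^2K(R,w)\dd w$, where $L=2R-c-1$ and $K=P(s-w)/(s-w)^2$. Two groups of crude bounds then finish the argument:
\begin{itemize}
\item The outward-rounded bounds $n<0.55$, $n'<-2.70$, $1.36<H<1.38$ and $-H'<2.28$ give $(n/3H)'<-0.42$.
\item The bounds $K<6$, $K_R\le7K$ (from termwise logarithmic derivatives) and $L<0.108$ give $J_R\le2L^3$ and $J_R'\le12L^2+14L^3<0.158$. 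Hence $(A_RJ_R)'<0.26$, and so $C'<-0.16$.
\end{itemize}

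The two routes buy different things. The paper's argument is checkable by hand in a few lines and survives small changes in $c$, $R_0$, $R_1$. It also shows why monotonicity holds: the antipodal part is $O(L^3)$, with derivative $O(L^2)$. Your route is exact and needs no estimates. However, it requires expanding a degree-ten numerator with very large rational coefficients, which in practice is a computer-algebra certificate; the paper deliberately avoids that. Also, as written, your decisive sign check is only anticipated, not carried out. The margin that guarantees it works is exactly what the paper's short estimates establish.

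One small point: check $H>0$ directly, for instance $H>1.36$ on the interval. Do not deduce it from $3a-2>0$, because in the paper that inequality is a hypothesis of \eqref{eq:transport-analytic} and is itself verified through $H$.
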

\begin{proof}
All estimates in this proof are deliberately rounded outward. On the slightly larger interval $0.604\le R\le0.611$ we have
\begin{equation}\label{eq:crude-ranges}
 0.4898\le s<0.497,\qquad
 0.8736<q<0.88,\qquad
 1.36<H<1.38,
 \qquad -H'<2.28.
\end{equation}
Indeed $q=1-2cR+c^2$ and
$H'=-4R-2c+4cq$.

Write $n=q^2-8s^5$. Then $n<0.55$ and
$n'=-4cq-40s^4<-2.70$. Hence the derivative of the first term in \eqref{eq:Cdef} satisfies
\[
 \left(\frac{n}{3H}\right)'
 =\frac{n'H-nH'}{3H^2}
 <\frac{-2.70(1.36)+0.55(2.28)}{3(1.38)^2}
 <-0.42.
\]

For the second term put
\[
 A_R=\frac4{3RH},\qquad
 L=2R-c-1,
 \qquad
 K(R,w)=\frac{P(s-w)}{(s-w)^2}.
\]
Then it equals $A_RJ_R$, where
\[
 J_R=\int_0^Lw^2K(R,w)\,\dd w.
\]
On our interval $L<0.108$, $1-R>0.389$, $m<0.636$, and the six positive terms in $K$ give $K<6$. At fixed $w$, differentiating those six terms shows $K_R\le7K$: the logarithmic derivative of each term is at most $3/s$, except for $s^2(s-w)$, whose logarithmic derivative is at most $2/s+1/(1-R)<7$. Also $A_R<1.64$ and
\[
 \frac{A_R'}{A_R}=-\frac1R-\frac{H'}H<0.05.
\]
Consequently
\[
 J_R\le2L^3,
 \qquad
 J_R'\le12L^2+14L^3<0.158,
 \qquad
 A_RJ_R<0.0042.
\]
It follows that
\[
 (A_RJ_R)'<1.64(0.158)+0.05(0.0042)<0.260.
\]
Combining the two estimates gives $C'(R)<-0.16<0$.
\end{proof}

\subsection{An analytic lower bound for the positive contact cap}
For the positive profile, regard the profile radius $v=L(\theta)$ as the variable. Put
\begin{equation}\label{eq:A-B-x}
 A=\frac{m-v^2}{2v}=\alpha+x,
 \qquad
 B=\sqrt{R^2-A^2},
 \qquad
 \gamma^2=R^2-\alpha^2.
\end{equation}
On the relevant branch $A$ increases and
\[
 -\frac{\dd\theta}{\dd v}=\frac{m+v^2}{2v^2B},
 \qquad
 \sin\theta=\frac{\gamma A-\alpha B}{R^2}.
\]
As a function of $x$, $B=\sqrt{\gamma^2-2\alpha x-x^2}$ and
$B''=-R^2/B^3$. Since $B\le\gamma$, Taylor's theorem gives
\begin{equation}\label{eq:B-upper}
 B\le\gamma-\frac{\alpha}{\gamma}x-
       \frac{R^2}{2\gamma^3}x^2
 =\gamma(1-y),
\end{equation}
where
\begin{equation}\label{eq:ydef}
 y=\frac{\alpha x}{\gamma^2}
   +\frac{R^2x^2}{2\gamma^4}.
\end{equation}
Here $0\le y<0.22$: indeed $x<0.194$, $\alpha<0.224$, and $\gamma^2>0.31$ on the interval. Therefore
\begin{align}
 \frac{\gamma A}{B}-\alpha
 &\ge
 \frac{R^2x}{\gamma^2}
 \left(1+\frac{\alpha x}{2\gamma^2}\right)\frac1{1-y}\notag\\
 &\ge
 \frac{R^2x}{\gamma^2}
 \left(1+\frac{\alpha x}{2\gamma^2}\right)(1+y+y^2).
 \label{eq:jacobian-lower}
\end{align}

Now set $z=R-v$, so $0\le z\le c$, $v=R-z$, and
\begin{equation}\label{eq:xformula}
 x=\frac{z(1-Rz)}{2R(R-z)}.
\end{equation}
Combining \eqref{eq:G-factor} and \eqref{eq:jacobian-lower}, and again dividing by $3a-2$, the four positive contact caps contribute to the volume lower bound at least
\begin{equation}\label{eq:Phi}
 P_+(R):=\int_0^c\Phi_R(z)\,\dd z,
\end{equation}
where
\begin{equation}\label{eq:Phi-density}
 \Phi_R(z)=
 \frac{4\pi(c-z)^2P(v)x}{3H\,v^2\gamma^2}
 \left(1+\frac{\alpha x}{2\gamma^2}\right)(1+y+y^2).
\end{equation}

\begin{lemma}\label{lem:Phi-monotone}
For every fixed $z\in[0,c]$, the function $R\mapsto\Phi_R(z)$ is nonincreasing on $[R_1,R_0]$.
\end{lemma}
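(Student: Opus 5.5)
Fix $z\in(0,c]$; at $z=0$ the density vanishes because $x=0$. Treat $\Phi_R(z)$ as a product of positive factors and show that its logarithmic derivative in $R$ is nonpositive. The dependence on $R$ enters through $v=R-z$, $s=R-c$, $m=1-R^2$, $q$, $H$, $\alpha=(1-2R^2)/(2R)$, $\gamma^2=R^2-\alpha^2$, and $x$ from \eqref{eq:xformula}. Group the factors as
\[
 \Phi_R(z)=\frac{4\pi(c-z)^2}{3}\cdot\frac{1}{H}\cdot\frac{P(v)}{v^2}\cdot\frac{x}{\gamma^2}\cdot\Bigl(1+\frac{\alpha x}{2\gamma^2}\Bigr)(1+y+y^2).
\]
The factor $(c-z)^2$ does not depend on $R$. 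The goal is then to bound each logarithmic derivative by explicit outward-rounded constants on $[0.604,0.611]$, in the same style as \cref{lem:Cmonotone}.

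The decreasing mechanism should come from $x/\gamma^2$. As $R$ increases, $\alpha$ decreases and $\gamma^2=R^2-\alpha^2$ increases. Concretely, $\gamma^2=R^2-(1-2R^2)^2/(4R^2)$, so $(\gamma^2)'=2R+(1-2R^2)/R^3\cdot(\ldots)$ is of size roughly $3$ to $4$. With $\gamma^2\approx0.34$, this gives $(\log\gamma^2)'\approx10$, which is large. Also $\log x=\log z+\log(1-Rz)-\log 2R-\log(R-z)$, whose derivative is $-z/(1-Rz)-1/R-1/(R-z)\approx-3.3$. Hence $(\log(x/\gamma^2))'\lesssim-12$, and this is the dominant negative term. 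The remaining factors I would bound from above:
\begin{itemize}
\item $(\log 1/H)'=-H'/H<2.28/1.36<1.7$, using \eqref{eq:crude-ranges}.
\item $(\log P(v)/v^2)'$: every monomial of $P$ is a product of powers of $m$, $s$, $v$. Since $m$ decreases and $s'=v'=1$, each monomial has logarithmic derivative at most $3/s\le 6.2$ or so, which gives at most about $6.2$. Subtracting $2/v$ (with $v\ge R-c\approx0.49$) leaves at most about $2.2$.
\item The correction factors $(1+\alpha x/2\gamma^2)(1+y+y^2)$: $y$ and $\alpha x/\gamma^2$ are products of positive powers of $\alpha$, $x$, $R$ and negative powers of $\gamma$. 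Their logarithmic derivatives are therefore at most $(\log x)'+(\log R^2)'-2(\log\gamma^2)'$ or smaller, which is negative. I would simply bound these factors' derivatives by $\le0$, after checking that each of $\alpha x/\gamma^2$ and $R^2x^2/\gamma^4$ is decreasing.
\end{itemize}

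The key steps, in order, are:
\begin{enumerate}
\item Record crude ranges for $\alpha,\gamma^2,(\gamma^2)',x,v$ on the slightly enlarged interval.
\item Prove $(\log(x/\gamma^2))'<-C_1$ with an explicit $C_1$, say $C_1>8$.
\item Prove $(\log(P/v^2))'+(-H'/H)<C_1$.
\item Verify the monotonicity of the correction factors.
\end{enumerate}
Summing these gives $(\log\Phi_R)'\le0$.

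The main obstacle is step 3 against step 2. The margin depends on $(\log\gamma^2)'$ really being large; I estimated roughly $10$, but this needs an honest computation of $(\gamma^2)'=2R-2\alpha\alpha'$ with $\alpha'=-1/(2R^2)-1$, giving $(\gamma^2)'=2R+2\alpha(1+1/(2R^2))\approx1.21+0.45\cdot2.37\approx2.3$. That yields $(\log\gamma^2)'\approx6.5$, so the total negative term is about $-9.8$, against a positive total of about $3.9$. The margin should survive, but the bound on $P$'s logarithmic derivative must be done carefully. I would use the fact that the $m$-containing monomials actually contribute $-2R/m\approx-1.9$ to their logarithmic derivatives, rather than discarding this term. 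If the margin ever became tight, the fallback is to compute $\partial_R\Phi$ directly as a rational function of $(R,z)$ and check the sign of its numerator polynomial via coefficient bounds.
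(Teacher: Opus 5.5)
Your plan is the paper's proof. You write $\Phi_R(z)$ as a product and let $X=x/\gamma^2$ supply the dominant negative logarithmic derivative. You bound $(\log(P(v)/v^2))'$ termwise and $-H'/H<1.68$ using \eqref{eq:crude-ranges}. Finally you check that $\alpha X$ and $R^2X^2$ decrease, so both correction factors decrease.

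\textbf{The estimate for the $P$ term is wrong as argued.} The monomials $2s^3v^2$ and $s^2v^3$ of $P$ have logarithmic derivatives $3/s+2/v$ and $2/s+3/v$. These are roughly $10$, not at most $3/s$. Done correctly, the termwise bound gives $(\log K)'\le 3/s<6.13$, not about $2.2$. It is cleanest to apply it directly to the six terms
\[
 2ms^3v^{-2},\quad 4ms^2v^{-1},\quad 6ms,\quad 3mv,\quad 2s^3,\quad s^2v
\]
of $K=P(v)/v^2$, using $v=R-z\ge s$. The maximum comes from $2s^3$ and $s^2v$, which contain no factor $m$. So keeping the $-2R/m$ contributions, as you suggest, does not improve this bound.

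\textbf{Consequence for the margin.} The positive side is about $6.13+1.68\approx7.8$, not $3.9$. The argument closes only because the negative side is really below $-9.9$. To see this cleanly, avoid the derivative of $R^2-\alpha^2$ and use $\gamma^2=1-1/(4R^2)$. Then $(\log\gamma^2)'=2/(R(4R^2-1))\ge 6.68$ on $[R_1,R_0]$, and
\[
 \frac{X_R}{X}=-\frac{z}{1-Rz}-\frac1R-\frac1{R-z}-\frac{2}{R(4R^2-1)}<-9.9,
\]
with worst case $z=0$, $R=R_0$. Your values $\gamma^2\approx0.34$ and $(\log\gamma^2)'\approx6.5$ are slightly off, though in the conservative direction.

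With these corrected constants your steps 2--4 go through, even with $C_1=8$, since $6.13+1.68<8<9.9$. The endpoints are trivial: $\Phi_R$ vanishes identically at $z=0$ (because $x=0$) and at $z=c$ (because $(c-z)^2=0$). So take logarithms only for $0<z<c$.
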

\begin{proof}
For $0<z<c$ put
\[
 K=\frac{P(R-z)}{(R-z)^2},
 \qquad X=\frac{x}{\gamma^2}.
\]
The six positive monomials in $K$ give $K_R/K<7$ by the same termwise differentiation as in \cref{lem:Cmonotone}. Moreover $-H'/H<1.68$. From \eqref{eq:xformula} and $\gamma^2=1-1/(4R^2)$,
\begin{equation}\label{eq:Xlog}
 \frac{X_R}{X}
 =-\frac{z}{1-Rz}-\frac1R-\frac1{R-z}
   -\frac{2}{R(4R^2-1)}<-9.9.
\end{equation}
Thus the product $KX/H$ strictly decreases.

It remains only to check the two correction factors. The quantity
$\alpha=1/(2R)-R$ decreases, so $1+\alpha X/2$ decreases. Also
\[
 y=\alpha X+\frac12R^2X^2.
\]
The first term decreases, while
\[
 \frac\dd{\dd R}\log(R^2X^2)
 =\frac2R+2\frac{X_R}{X}<3.32-19.8<0.
\]
Hence $y$, and therefore $1+y+y^2$, decreases. This proves the claim; the endpoints $z=0,c$ follow by continuity.
\end{proof}

It remains to lower-bound the single endpoint $P_+(R_0)$. This can be done without quadrature. Let
\[
 \pi_-:=\frac{103993}{33102}<\pi
\]
and let $\widetilde\Phi_{R_0}$ denote \eqref{eq:Phi-density} with $\pi$ replaced by $\pi_-$. Define
\[
 Q(z)=\frac{\widetilde\Phi_{R_0}(z)}{z(c-z)^2}.
\]
The Taylor series of $Q$ at zero has nonnegative coefficients. Indeed
\begin{equation}\label{eq:xpositive-series}
 \frac{x}{z}=\frac1{2R_0^2}
 +\sum_{n\ge1}\frac{m_0}{2R_0^{n+2}}z^n,
 \qquad m_0=1-R_0^2,
\end{equation}
so $x/z$, $x$, and both correction factors in \eqref{eq:Phi-density} have nonnegative coefficients. Also
\[
 \frac{P(v)}{v^2}=\frac{A_0}{v^2}+\frac{B_0}{v}+C_0+D_0v,
\]
with
\[
 A_0=2m_0s_0^3,\quad B_0=4m_0s_0^2,
 \quad C_0=6m_0s_0+2s_0^3,
 \quad D_0=3m_0+s_0^2.
\]
For $v=R_0-z$, the coefficient of $z$ in this last expression is
$2A_0/R_0^3+B_0/R_0^2-D_0>0.87$, and every coefficient of degree at least two is positive. Hence all coefficients of $Q$ are nonnegative.

The first four coefficients are rational and direct cross multiplication gives
\begin{equation}\label{eq:Q-coefficients}
 Q(z)\ge \frac{121}{2}+\frac{752}{5}z+614z^2+2120z^3
 \qquad(0\le z\le c).
\end{equation}
For reference, the four actual coefficients (still using $\pi_-$) begin
$60.5150648\ldots$, $150.4950680\ldots$, $615.0418087\ldots$, and $2122.1389426\ldots$; the proof of \eqref{eq:Q-coefficients} uses only the displayed rational lower bounds, not these decimal expansions. Integrating \eqref{eq:Q-coefficients} gives the exact rational estimate
\begin{align}
 P_+(R_0)
 &>\frac{121}{2}\frac{c^4}{12}
 +\frac{752}{5}\frac{c^5}{30}
 +614\frac{c^6}{60}
 +2120\frac{c^7}{105}\notag\\
 &=\frac{403060471590314053884071}
 {410156250000000000000000000}
 >0.0009826998164.
 \label{eq:Pplus-endpoint}
\end{align}

\subsection{Endpoint arithmetic and completion}
We collect the elementary constant bounds used here and in the near-Jung branch.
\begin{lemma}[Elementary rational bounds]\label{lem:constants}
One has
\begin{align*}
 \frac{103993}{33102}&<\pi<\frac{104348}{33215},\\
 \frac{1732050807}{10^9}&<\sqrt3<\frac{1732050808}{10^9},\\
 \sqrt{\frac23}&<\frac{816496581}{10^9},\\
 \frac{339836909}{10^9}&<\arcsin\frac13<\frac{339836910}{10^9}.
\end{align*}
\end{lemma}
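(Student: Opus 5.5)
The plan is to prove each of the four pairs of inequalities with elementary rational arithmetic: alternating or positive-term series, truncated with explicit tail bounds, and then cross-multiplication of integers. No step is expected to need more than a few terms.

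For $\pi$ I would use Machin's formula $\pi=16\arctan\frac15-4\arctan\frac1{239}$. The Taylor series $\arctan t=\sum_{k\ge0}(-1)^kt^{2k+1}/(2k+1)$ alternates with decreasing terms for $0<t<1$. Its partial sums therefore bracket the value: stopping after an even-indexed term gives an upper bound, and stopping after an odd-indexed term gives a lower bound. About eight terms for $\arctan\frac15$ and two terms for $\arctan\frac1{239}$ give error below $10^{-11}$. That suffices, because $\pi-103993/33102\approx5.8\times10^{-10}$ and $104348/33215-\pi\approx3.3\times10^{-10}$. Combining the brackets with the correct signs gives rational bounds $\ell<\pi<u$. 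The two claimed inequalities then reduce to comparing $\ell$ with $103993/33102$ and $u$ with $104348/33215$ by cross-multiplying integers.

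The square-root bounds need only squaring. For $\sqrt3$, check the integer inequalities $1732050807^2<3\cdot10^{18}<1732050808^2$. For $\sqrt{2/3}$, check $3\cdot816496581^2>2\cdot10^{18}$. Both are exact finite integer comparisons.

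The main obstacle is $\arcsin\frac13$, because its required window has width only $10^{-9}$. I would use $\arcsin t=\sum_{k\ge0}\binom{2k}{k}t^{2k+1}/(4^k(2k+1))$ with $t=\frac13$. All terms are positive, so any partial sum is a lower bound. For the upper bound, note that the ratio of consecutive coefficients is at most $1$, so the tail after the $N$th term is at most the next term times $1/(1-t^2)=\frac98$. Since $t^{2k+1}=3^{-(2k+1)}$, the $k$th term is roughly $3^{-2k}/(2k+1)$ times a factor below $1$. Taking $N\approx9$ pushes the tail below $10^{-11}$. The numerical value $\arcsin\frac13=0.3398369094\ldots$ sits about $4\times10^{-10}$ inside each end of the window, so this accuracy is enough.

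As a fallback, if that margin felt tight, I would instead write $\arcsin\frac13=\arctan(1/(2\sqrt2))$. That uses the alternating arctan series together with the rational bounds for $\sqrt2$ obtained by squaring, and monotonicity of $\arctan$ absorbs the error from $\sqrt2$.

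In either route the final step is again cross-multiplication of integers, which completes the verification.
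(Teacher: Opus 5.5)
Your proposal is correct and takes the paper's route: squaring for the two square-root bounds, Machin's formula with bracketing alternating partial sums for $\pi$, and the positive $\arcsin$ series with the $\frac98$ geometric tail bound. Two small differences in bookkeeping. The upper bound for $\arctan\frac1{239}$, which enters the lower bound for $\pi$, needs the three-term sum through degree $5$, not two terms; the paper also stops $\arctan\frac15$ at degrees $15$ and $13$. For $\arcsin\frac13$ the paper already suffices with the partial sum through $n=7$ plus $\frac98$ times the next term, so your $N\approx9$ is more than enough.
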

\begin{proof}
The square-root bounds follow by squaring. For $\pi$, use Machin's identity
\[
 \pi=16\arctan\frac15-4\arctan\frac1{239}
\]
and the alternating series for $\arctan$. For the lower bound use the lower partial sum through degree $15$ for $\arctan(1/5)$ and the upper partial sum through degree $5$ for $\arctan(1/239)$; for the upper bound use the upper partial sum through degree $13$ for $\arctan(1/5)$ and the lower partial sum through degree $3$ for $\arctan(1/239)$. Direct cross multiplication places the resulting two rational numbers strictly between the displayed bounds. For $\arcsin(1/3)$ use
\[
 \arcsin x=\sum_{n\ge0}
 \frac{\binom{2n}{n}}{4^n(2n+1)}x^{2n+1}.
\]
At $x=1/3$ the ratio of consecutive positive terms is less than $1/9$; the partial sum through $n=7$, together with $9/8$ times the next term as an upper bound for the tail, lies between the two displayed rationals.
\end{proof}

By \cref{lem:Cmonotone},
$C(R)\ge C(R_0)$ on the intermediate interval. Exact simplification gives
\begin{equation}\label{eq:Cendpoint}
 C(R_0)=
 \frac{41151754881289593456446793959}
 {313967429174950384053750000000}.
\end{equation}
Combining \cref{lem:Phi-monotone}, \eqref{eq:Pplus-endpoint}, and $\pi>\pi_-$ yields, for every $R_1\le R\le R_0$,
\begin{align}
 \Vol(K)
 &\ge \pi C(R)+P_+(R)\notag\\
 &>\pi_- C(R_0)
 +\frac{403060471590314053884071}
 {410156250000000000000000000}\notag\\
 &>0.4127516885
 >\frac{1651}{4000}.
 \label{eq:intermediate-final}
\end{align}
The second line is an exact rational number; direct cross multiplication proves the last two inequalities. Thus no decimal approximation is logically used in this comparison.

It remains to record the scalar endpoint comparison in the near-Jung branch without numerical assistance. At $R=R_0$, \cref{lem:sharpD} gives
\[
 D\le D_0:=\frac{303311182}{7660844409}<0.06.
\]
The function $B(D)$ in \eqref{eq:B} decreases on $[0,0.06]$. In addition to \cref{lem:constants}, the exact rational comparisons
\[
 \sqrt{(1-D_0)(2+D_0)}>\frac{1399585669}{10^9},
 \qquad
 \left(1-\frac{D_0}{6}\right)^{3/2}>
 \frac{990118247}{10^9}
\]
follow by squaring. Using the lower bounds for the positive terms and the upper bounds for the coefficients of the negative terms in \eqref{eq:B} gives
\begin{equation}\label{eq:near-jung-final}
 B(D_0)>0.412753180790651>\frac{1651}{4000}.
\end{equation}
For clarity, put
\[
 \pi_+:=\frac{104348}{33215},\qquad
 r_-:=\frac{1732050807}{10^9}.
\]
Then one may take
\[
 J_0>\frac{11}{12}-
 \frac1{6}\frac{104348}{33215}\frac{1732050808}{10^9},
\]
use $\beta_0>\pi_-/2+339836909/10^9$ in the positive term $3\beta_0J_0$, use the corresponding lower bounds in $E_0$, and use the upper bounds
$\beta_0<\pi_+/2+339836910/10^9$ and
$j_0<\pi_+/(12r_-)-1/8$ in the negative linear term. This is direct rational arithmetic and gives the displayed margin.

\begin{proof}[Proof of \cref{thm:main}]
If $R\le R_1$, use \eqref{eq:small-analytic}. If $R_1\le R\le R_0$, use \eqref{eq:intermediate-final}. If $R\ge R_0$, \cref{thm:geometric}, \cref{lem:sharpD}, and \eqref{eq:near-jung-final} apply. These three ranges cover the full Jung interval. Scaling from width one to width $w$ proves \eqref{eq:main}.
\end{proof}

\begin{remark}[Where slack remains]
The proof has no finite mesh or computer-assisted step, but it is not optimized. The near-Jung estimate loses information in the face symmetrization, in the derivative bounds for dihedral angles, and in the lower estimate for the additional face-edge sectors. The intermediate radial estimate deliberately replaces the exact angular Jacobian by the second-order bound \eqref{eq:B-upper} and then keeps only four Taylor coefficients at the endpoint. Thus $0.41275$ should not be interpreted as a limit of the contact-tetrahedron or radial methods.
\end{remark}

\section*{Acknowledgments}
Part of this research was carried out at Axiom Math. The author thanks Evan Chen, Vasily Ilin, Simon Mahns, Ken Ono, and Jesse Thorner for fruitful discussions around AI for mathematics. GPT-6 Astra provided AI assistance during exploratory stages of the project, including derivations, calculations, exploratory checks, and drafting. The author is responsible for the mathematical statements and the final proof.

\end{document}